\documentclass[11pt]{article}

\usepackage{amsfonts}
\usepackage{amsmath}
\usepackage{amssymb}
\usepackage[mathscr]{eucal}
\usepackage[usenames]{color}
\usepackage[dvipsnames]{xcolor}
\usepackage{hyperref}
\usepackage{soul}
\newcommand{\os}{ \mathrm{os}}
\newcommand{\cyc}{ \mathrm{cyc}}
\newcommand{\Z}{\mathbb{Z}}
\newcommand{\lcm}{\mathop{\mathrm{lcm}}}

\def\eod{\vrule height 6pt width 5pt depth 0pt}
\newenvironment{proof}{\noindent {\bf Proof:} \hspace{.2em}}
                      {\hspace*{\fill}{\eod}\medskip}

\newcommand{\comment}[1]{}

\newtheorem{theorem}{Theorem}[section]
\newtheorem{proposition}[theorem]{Proposition}
\newtheorem{corollary}[theorem]{Corollary}

\newtheorem{question}[theorem]{Question}
\newtheorem{remark}[theorem]{Remark}

\newtheorem{lemma}[theorem]{Lemma}

\begin{document}
\title{On the order sequence of a group}	
	
	\author{
		Peter J. Cameron \\
		School of Mathematics and Statistics, \\
		 University of St Andrews, \\
		North Haugh, St Andrews, 
		Fife, KY16 9SS, UK \\
		email: pjc20@st-andrews.ac.uk \\
		\and 
		Hiranya Kishore Dey \\ 
		Department of Mathematics,\\
		Indian Institute of Science, Bangalore\\
		Bangalore 560 012, India\\
		email: hiranya.dey@gmail.com
	}
	
	\maketitle 
	
\begin{abstract}
This paper provides a bridge between two active areas of research, the
spectrum (set of element orders) and the power graph of a finite group.

The \emph{order sequence} of a finite group $G$ is the list of orders of
elements of the group, arranged in non-decreasing order. Order sequences
of groups of order $n$ are ordered by elementwise \emph{domination}, forming a
partially ordered set. We prove a number of results about this poset, among
them the following.
\begin{itemize}
\item M.~Amiri recently proved that the poset has a unique maximal element, corresponding to the cyclic group.
We show that the product of orders in a cyclic group of order $n$
is at least $q^{\phi(n)}$ times as large as the product in any non-cyclic group,
where $q$ is the smallest prime divisor of $n$ and $\phi$ is Euler's function,
with a similar result for the sum.
\item The poset of order sequences of abelian groups of order $p^n$ is naturally
isomorphic to the (well-studied) poset of partitions of $n$ with its natural
partial order.
\item If there exists a non-nilpotent group of order $n$, then there exists
such a group whose order sequence is dominated by the order sequence of any
nilpotent group of order $n$.
\item There is a product operation on finite ordered sequences, defined
by forming all products and sorting them into non-decreasing order. The
product of order sequences of groups $G$ and $H$ is the order sequence of a
group if and only if $|G|$ and $|H|$ are coprime.
\end{itemize}
The paper concludes with a number of open problems.
\end{abstract} 
	
	{\bf Keywords}: order sequence; nilpotent group; group extension;
partition of an integer; partition lattice
	
	{\bf 2020 MSC}: 20D15; 20D60; 20E22; 05E16
	
\section{Introduction}
\label{sec:intro}
                     	
Let $G$ be a finite group. H. Amiri, S. M. Jafarian Amiri and I. M. Isaacs in \cite{amiri-communication} defined the following function:
$$\psi(G)= \sum_{g \in G} o(g),$$ 
where $o(g)$ denotes the order of the element $g$.  They were able to prove the following:
                      	
\begin{theorem}
\label{thm:amiri-cia} 
For any finite group $G$ of order $n$, $\psi(G) \leq \psi(\mathbb{Z}_n)$ and equality holds if and only if $G \cong \mathbb{Z}_n,$ where $\mathbb{Z}_n$ is the cyclic group of order $n$.
\end{theorem}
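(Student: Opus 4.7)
My plan is to reduce the inequality $\psi(G) \leq \psi(\mathbb{Z}_n)$ to a non-negative combination of Frobenius defects. The first step is to rewrite $\psi$ by grouping each element with the cyclic subgroup it generates. Since a cyclic subgroup of order $d$ has exactly $\phi(d)$ generators (each of order $d$), one gets
\[
\psi(G) \;=\; \sum_{d \mid n} d\,\phi(d)\,c_d(G), \qquad \psi(\mathbb{Z}_n) \;=\; \sum_{d \mid n} d\,\phi(d),
\]
where $c_d(G)$ counts the cyclic subgroups of $G$ of order $d$ (and $c_d(\mathbb{Z}_n) = 1$ for $d \mid n$). The related Frobenius count $\theta_d(G) := |\{g \in G : g^d = 1\}|$ equals $\sum_{d' \mid d}\phi(d')\,c_{d'}(G)$, and the classical theorem of Frobenius says that $d$ divides $\theta_d(G)$, so $\theta_d(G) \geq d = \theta_d(\mathbb{Z}_n)$ for every $d \mid n$. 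Moreover, for non-cyclic $G$ we have $c_n(G) = 0$, which combined with the identity $\sum_{d \mid n} \phi(d)\,c_d(G) = n$ forces a strict inequality $\theta_d(G) > d$ at some proper divisor.

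The second step handles the prime-power case $n = p^k$, where the divisor set is the chain $1 \mid p \mid p^2 \mid \cdots \mid p^k$. An Abel summation along this chain, using the fact that the top partial sum $\theta_{p^k}(G) - p^k$ vanishes, yields the telescoping identity
\[
\psi(\mathbb{Z}_{p^k}) - \psi(G) \;=\; (p-1)\sum_{j=0}^{k-1} p^j\bigl(\theta_{p^j}(G) - p^j\bigr),
\]
which is a manifestly non-negative combination of Frobenius defects, with strict positivity whenever some defect is positive (as happens for every non-cyclic $p$-group). The general nilpotent case then follows by Sylow decomposition: $\psi$ is multiplicative on coprime direct products, so for $G = \prod_p S_p$ and $\mathbb{Z}_n = \prod_p \mathbb{Z}_{p^{a_p}}$ the inequality factors prime-by-prime.

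The main obstacle is the non-nilpotent case, where $G$ does not split as a direct product of its Sylow subgroups and the multiplicativity argument does not apply term-by-term. I would handle this by induction on $n$: pick a suitable proper subgroup or quotient of $G$, apply the inductive hypothesis there, and bound the remaining contribution using Frobenius and Cauchy's theorem (the latter guaranteeing plenty of elements of small prime order in $G$). A conceptually attractive variant is to pass from $G$ to its ``nilpotent companion'' $\prod_p S_p$, establish $\psi(G) \leq \psi\bigl(\prod_p S_p\bigr)$ by a counting argument that exploits the non-normality of some Sylow subgroup (producing multiple conjugate Sylows, hence extra small-order elements and fewer large-order ones), and then combine with the nilpotent case already handled. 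Either way, once the strict inequality $\psi(G) < \psi(\mathbb{Z}_n)$ is established for all non-cyclic $G$, the equality characterization is immediate: $\theta_d(G) = d$ for all $d \mid n$ forces $c_d(G) = 1$ for all $d \mid n$, and hence $G \cong \mathbb{Z}_n$.
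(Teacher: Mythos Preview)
Your argument for $p$-groups via Abel summation and Frobenius defects is correct and elegant, and the extension to nilpotent groups by multiplicativity is fine. However, the non-nilpotent case is a genuine gap, not just a detail to be filled in. Neither of your two sketches is close to a proof. For the inductive approach you do not specify which subgroup or quotient to use, nor how to control the element orders outside it; this is exactly where the difficulty of the theorem lies. For the ``nilpotent companion'' approach, the inequality $\psi(G)\le\psi\bigl(\prod_p S_p\bigr)$ is not obviously easier than the theorem itself, and your one-line justification (``non-normality produces extra small-order elements'') is not an argument. Worse, when every Sylow subgroup of $G$ is cyclic (e.g.\ $G=S_3$, or any $\mathbb{Z}$-group), the companion \emph{is} $\mathbb{Z}_n$, so approach~(b) would only give $\psi(G)\le\psi(\mathbb{Z}_n)$ and you would still need a separate argument for strictness.

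By contrast, the paper does not prove Theorem~\ref{thm:amiri-cia} directly at all. It quotes M.~Amiri's strong-domination theorem (Theorem~\ref{t:amiristrong}): for any group $G$ of order $n$ there is a bijection $f:\mathbb{Z}_n\to G$ with $o(f(x))\mid o(x)$. Summing orders gives $\psi(G)\le\psi(\mathbb{Z}_n)$ immediately, and strictness for non-cyclic $G$ follows since $G$ has no element of order $n$ while $\mathbb{Z}_n$ has $\phi(n)$ of them. So the paper's ``proof'' is a one-line corollary of a much stronger external result, whereas you are attempting a self-contained argument. Your route is more informative in the nilpotent case (the explicit identity for $\psi(\mathbb{Z}_{p^k})-\psi(G)$ is nice), but as written it does not reach the finish line.
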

                      	
That is, $\mathbb{Z}_n$ is the unique group of order $n$ with the largest value of $\psi(G)$ for groups of that order. Later S. M. Jafarian Amiri and M. Amiri in \cite{amiri-pureandApplied} and, independently, R. Shen, G. Chen, and C. Wu in \cite{shen-et-al} investigated the groups with the second largest value of the sum of element orders. 
 
This function $\psi$ has been considered in various works (see \cite{amiri-communication-secondmax, amiri-algapplctn, asad-joa, chew-chin-lim, Her-pureandApplied,  Her-joa, Her-cia, Her-jpaa,  tarnauceanu-israel}). While the goal of some of the papers was to find out the largest, second largest, or least possible values of $\psi(G)$, others aimed  to prove new criteria for structural properties (like solvability, nilpotency, etc.) of finite groups.
                      	
Later, S. M. Jafarian Amiri and M. Amiri \cite{amiri-amiri-cia} considered the following  generalization of the above function defined by
$\psi_k(G)= \sum_{g \in G} o(g)^k$
for positive integers $k \geq 1$ and they proved that for any positive integer $k$, $\psi_k(G) < \psi_k(\mathbb{Z}_n)$  for all non-cyclic groups $G$ of order $n$. 
Recently, the product of the element orders of a group, which is denoted by $\rho(G)$, has also been considered and several results regarding $\rho(G)$ were proved in \cite{garonzi-patassini}, including the following:

\begin{theorem}
\label{thm:prod-thm-max} 
$\rho(G) \leq \rho(\mathbb{Z}_n)$ for every finite group $G$ of order $n$, and $\rho(G) = \rho(\mathbb{Z}_n)$ if and only if $G \cong \mathbb{Z}_n.$ 
\end{theorem}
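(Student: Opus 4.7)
The plan is to compare $\rho(G)$ and $\rho(\mathbb{Z}_n)$ one prime at a time, using Frobenius's classical theorem as the essential tool: for any divisor $d$ of $n=|G|$, the count $N_d(G):=\#\{x\in G:x^d=1\}$ is a multiple of $d$, hence $N_d(G)\geq d$, while $N_d(\mathbb{Z}_n)=d$ realises this lower bound.

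Fix a prime $p\mid n$ and write $n=p^a m$ with $\gcd(p,m)=1$. Since $v_p(o(x))=\#\{k\geq 1:p^k\mid o(x)\}$ and the condition $p^k\mid o(x)$ is equivalent to $o(x)\nmid p^{k-1}m$, one can slice the $p$-adic valuation of $\rho$ layer by layer:
$$v_p(\rho(G))=\sum_{x\in G}v_p(o(x))=\sum_{k=1}^{a}\bigl(n-N_{p^{k-1}m}(G)\bigr).$$
Applying Frobenius termwise gives $v_p(\rho(G))\leq \sum_{k=1}^{a}(n-p^{k-1}m)=v_p(\rho(\mathbb{Z}_n))$, and multiplying over all primes $p\mid n$ yields $\rho(G)\leq\rho(\mathbb{Z}_n)$.

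The equality statement is where I expect the main difficulty. The identity $\rho(G)=\rho(\mathbb{Z}_n)$ forces $N_{p^{k-1}m}(G)=p^{k-1}m$ for every prime $p\mid n$ and every $1\leq k\leq a$, and the task is to translate this family of tight Frobenius counts into the structural conclusion $G\cong\mathbb{Z}_n$. The strategy would be to argue that a non-cyclic $G$ must admit, for some well-chosen prime $p$ and exponent $k$, at least two distinct cyclic subgroups whose combined contributions force $N_{p^{k-1}m}(G)>p^{k-1}m$ strictly. The trickiest sub-case is likely the \emph{Z-group} situation in which every Sylow subgroup of $G$ is cyclic yet $G$ itself is not: there one must exploit the resulting metacyclic semidirect-product structure and count elements of small order (for instance those of order dividing $m$) to locate the strict inequality, since no failure of tightness can come from a non-cyclic Sylow subgroup.
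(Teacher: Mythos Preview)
Your inequality argument via Frobenius's theorem is correct and takes a genuinely different route from the paper. The paper does not prove this theorem directly; it simply remarks (in the Introduction, and again via the Corollary at the start of Section~\ref{sec:max-order-seq}) that the result is an immediate consequence of Amiri's theorem (Theorem~\ref{t:amiristrong}) that $\os(\mathbb{Z}_n)$ strongly dominates $\os(G)$ for every group $G$ of order~$n$: since the product is a strictly increasing symmetric function of the entries of the order sequence, domination with at least one strict entry (which occurs precisely when $G\not\cong\mathbb{Z}_n$) forces $\rho(G)<\rho(\mathbb{Z}_n)$. Thus for the paper both the inequality and the equality characterisation come in a single stroke, at the cost of importing a 2024 result as a black box.

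Your approach has the merit of resting only on Frobenius's classical 1895 count, and your layer-by-layer identity $v_p(\rho(G))=\sum_{k=1}^{a}\bigl(n-N_{p^{k-1}m}(G)\bigr)$ is clean. The trade-off is exactly the one you flag: the equality case does not come for free. Your proposal for that case is only a sketch. The non-cyclic-Sylow sub-case is in fact easy along the lines you indicate (if the Sylow $p$-subgroup is non-cyclic then $G$ has no element of order divisible by $p^{a}$, whence $N_{n/p}(G)=n>n/p$, already a strict Frobenius inequality). The Z-group sub-case, however, is left as an intention (``the strategy would be\ldots'', ``the trickiest sub-case is likely\ldots'') rather than an argument: you have not yet identified which of the specific divisors $n/p^{j}$ witnesses a strict inequality when $G$ is a non-cyclic metacyclic group with all Sylow subgroups cyclic, nor carried out the count. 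So as it stands your proposal proves the inequality but leaves a genuine gap in the equality half, whereas the paper's domination route closes both at once.
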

                      
Inspired by these works and with a goal to give a unified approach to study those functions, we study the $\emph{order sequence}$ of a group $G$, which is defined as the sequence 
$$\os(G)=(o(g_1), o(g_2), \dots, o(g_n)),$$ 
where $o(g_i) \leq o(g_{i+1})$ for $1 \leq i \leq n-1.$ 

For example, one can check that
$\os(\mathbb{Z}_6)=(1,2,3,3,6,6)$ and $\os(S_3)=(1,2,2,2,3,3)$. 

The sequence $\os(\mathbb{Z}_n)$ for the cyclic group of order~$n$ can be determined explicitly: for each divisor $d$ of $n$, the entry $d$ occurs $\phi(d)$ times, where $\phi$ is Euler's function. 
                      	
Let $E(p^r)$ denote the elementary abelian group of order $p^r$. Then the order sequence of this group is $(1,p,p,\dots,p).$ Note that, if $p$ is odd and $r \geq 3$ then there are groups with the same order sequence as $E(p^r)$ but not isomorphic to it (non-abelian groups of exponent~$p$). The smallest examples of pairs of groups with the same order sequence have order~$16$; one such pair is
$\Z_4\times\Z_4$ and $\Z_2\times Q_8$, where $Q_8$ is the quaternion group.
                      	
For two groups $G$ and $H$ of order $n$, we say that $\os(G)$ \emph{dominates} $\os(H)$ if $o(g_i) \geq o(h_i)$ for $1 \leq i \leq n.$ This relation is a partial order, but not a total order; there are groups which are incomparable in this order. We shall implicitly think of it as an order on the finite groups of order $n$; it induces a \emph{partial preorder} on isomorphism classes of groups, that is, a reflexive and transitive relation, since there are groups which have the same order sequence.

However, there is a unique maximal element, namely $\Z_n$ (a recent result of 
M.~Amiri~\cite{amiri-strongdom}). Theorems \ref{thm:amiri-cia}
and  \ref{thm:prod-thm-max} follow immediately from this theorem. 
In Section \ref{sec:max-order-seq} we use the maximality
of $\Z_n$ under domination to establish bounds on the gaps between the values
of $\psi$ and of $\rho$ on cyclic and non-cyclic groups, and characterize
groups meeting these bounds (Theorem~\ref{thm:upper-bdd-prod-eltorder-G}).

In Section \ref{sec:products-extensions}, we define products of order sequences of two groups $G$ and $H$, and prove that if $|G|$ and $|H|$ are not coprime, then there exists no group whose order sequence is the same as the product of the order sequences of $G$ and $H$. We also show that the product of the order sequences of two groups $G$ and $H$ of coprime orders dominates the order sequence of any extension of $G$ by $H$ if $G$ is abelian. 

In Section \ref{sec:abelian-groups}, we explicitly determine the order sequence of an abelian $p$-group; using this, in Theorem \ref{thm:relation-with-poset-partitionlattice}, we show that the
poset of order sequences of abelian groups of order $p^n$ is naturally
isomorphic to the (well-studied) poset of partitions of $n$ with its natural
partial order. 

In Section \ref{sec:nilpotent-nonnilpotentgroups}, we find the groups with minimal order sequences among the family of nilpotent groups. In Theorem \ref{thm:non-nil-min-os-existence}, we show that if there is a non-nilpotent group of order $n$, 
 a group with minimal order sequence must be non-nilpotent.
 
Equality of the order sequence defines an equivalence relation on groups of
given order, and the equivalence classes are partially ordered by domination. In Section \ref{sec:abelian-groups}, we give a complete description of this order for abelian groups, in terms of the lattice of partitions of an integer. In
Section~\ref{sec:moregeneral}, we observe that for all groups the poset can be
rather complicated.

Section~\ref{sec:graphs} describes the connection between the order sequence
and some well-studied graphs related to groups. We show that the power graph
determines the order sequence, which in turn determines the Gruenberg--Kegel
graph.

The final Section~\ref{sec:more-questions} lists some open problems.
 
Throughout the paper, most of our notation is standard; for any undefined term, we refer the reader to the books \cite{Isac-ams, Scott}.

\section{Products and extensions}
\label{sec:products-extensions} 
        
We begin with a general result on order sequences and domination.
        
\begin{proposition}
Let $G$ and $H$ be groups of the same order. Then $\os(G)$ dominates $\os(H)$
if and only if there is a bijecton $f:G\to H$ such that $o(g)\ge o(f(g))$ for
all $g\in G$.
\label{p:bijection}
\end{proposition}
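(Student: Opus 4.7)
The plan is to prove the two directions separately; both come down to the same counting idea applied to sorted sequences.

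For the forward direction, assume $\os(G)$ dominates $\os(H)$. I would simply enumerate $G=\{g_1,\ldots,g_n\}$ and $H=\{h_1,\ldots,h_n\}$ so that the sequences $(o(g_i))$ and $(o(h_i))$ are the order sequences themselves, i.e.\ non-decreasing. Defining $f(g_i)=h_i$ produces a bijection with $o(g_i)\ge o(h_i)=o(f(g_i))$ directly from the domination hypothesis. So the real content is the converse.

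For the converse, suppose $f\colon G\to H$ is a bijection with $o(g)\ge o(f(g))$ for every $g$, and suppose for contradiction that $\os(G)$ does \emph{not} dominate $\os(H)$. Write $\os(G)=(a_1,\ldots,a_n)$ and $\os(H)=(b_1,\ldots,b_n)$; then there exists $k$ with $a_k<b_k$. Set $v=b_k$. The first $k$ entries of $\os(G)$ correspond to $k$ elements of $G$ whose orders are all at most $a_k$, hence strictly less than $v$. Their images under $f$ are $k$ distinct elements of $H$, each of order at most the order of its preimage, hence strictly less than $v$. On the other hand, since $b_k=v$ and the sequence $(b_i)$ is non-decreasing, the elements of $H$ of order strictly less than $v$ are found among $\{h \in H : o(h) < v\}$, a set of cardinality at most $k-1$. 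This contradicts the existence of $k$ such images, completing the argument.

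I expect no real obstacle here; the only place that requires a bit of attention is the bookkeeping around ties, which is why I phrase the counting in terms of the strict inequality $o(h)<v$ rather than indices. The proof is essentially a one-line Hall-type injectivity argument, and it is worth noting that the same reasoning also yields a cleaner equivalent formulation: $\os(G)$ dominates $\os(H)$ if and only if, for every integer $m$, the number of elements of $G$ with order at most $m$ is at most the number of elements of $H$ with order at most $m$. This perspective may be useful in later sections when comparing order sequences across different groups.
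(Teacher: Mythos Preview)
Your proof is correct and follows essentially the same approach as the paper's: the forward direction is the obvious coordinate-wise bijection, and for the converse both arguments rest on the observation that $f(g_1),\ldots,f(g_k)$ give $k$ distinct elements of $H$ with order at most $o(g_k)$, forcing the $k$th smallest order in $H$ to be at most $o(g_k)$. The only cosmetic difference is that you frame the converse as a contradiction while the paper argues directly; your closing reformulation in terms of $|\{g\in G:o(g)\le m\}|\le|\{h\in H:o(h)\le m\}|$ is the natural common ground between the two phrasings.
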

       
\begin{proof} The forward implication is clear. For the converse, suppose
that the bijection exists, and let $G=\{g_1,\ldots,g_n\}$ where
$o(g_1)\le\cdots\le o(g_n)$. Then, for $1\le k\le n$, there are at least $k$
elements of $H$ whose orders do not exceed $o(g_k)$, namely $f(g_1),\ldots,
f(g_k)$; so, if $h_k$ is the element of $k$ whose order is the $k$th (in
increasing order), then $o(h_k)\le o(g_k)$ as required.
\end{proof}
        
On the basis of this, we make a stronger definition. For groups $G$ and $H$
of the same order, we say that $\os(G)$ \emph{strongly dominates} $\os(H)$
if there is a bijection $f:G\to H$ such that $o(f(g))\mid o(g)$ for all
$g\in G$.

With this terminology, we can state Amiri's result~\cite{amiri-strongdom}:

\begin{theorem}
The order sequence of $\Z_n$ strongly dominates that of any other group of
order~$n$.
\label{t:amiristrong}
\end{theorem}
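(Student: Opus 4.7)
The natural framework is Hall's marriage theorem. By Proposition~\ref{p:bijection}, strong domination is equivalent to the existence of a perfect matching in the bipartite graph $B$ on $\Z_n \sqcup G$ with edges $\{(x,g) : o(g) \mid o(x)\}$. Because elements of $\Z_n$ of equal order have identical neighbourhoods in $G$, it suffices to verify Hall's condition for sets $S = \{x \in \Z_n : o(x) \in T\}$; and replacing $T$ by its downward closure $T^{\downarrow}$ in $\operatorname{div}(n)$ enlarges $S$ while preserving $N(S)$, which only strengthens the condition. Hall's condition therefore reduces to
\[
|\{g \in G : o(g) \in T\}| \;\geq\; \sum_{e \in T}\phi(e)
\]
for every downward-closed $T \subseteq \operatorname{div}(n)$. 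For a principal ideal $T = \{e : e \mid d\}$ this is exactly Frobenius's theorem: the number of $g \in G$ with $g^d = 1$ is a positive multiple of $d$, while the right-hand side equals $d$.

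For general downward-closed $T$, my plan is to construct inside $G$ a family of subsets $\{C_d\}_{d \mid n}$ mimicking the cyclic subgroup lattice of $\Z_n$: each $C_d$ a subset of $E_d(G) := \{g : g^d = 1\}$ of cardinality exactly $d$, satisfying $C_{d_1} \cap C_{d_2} = C_{\gcd(d_1, d_2)}$. Given such a family, for any downward-closed $T$ with maximal elements $d_1, \dots, d_r$ the union $\bigcup_i C_{d_i}$ lies in $\{g : o(g) \in T\}$, and inclusion--exclusion applied to the $C_{d_i}$'s (the intersection formula propagates to $\bigcap_{i \in I} C_{d_i} = C_{\gcd_I}$) gives $|\bigcup_i C_{d_i}| = \sum_{e \in T}\phi(e)$, matching the count in $\Z_n$. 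Hall's condition follows, and the bijection is then read off by pairing, for each $d$, the $\phi(d)$ elements of $\Z_n$ of order $d$ with the $\phi(d)$ elements of $C_d \setminus \bigcup_{d' \mid d,\, d' \neq d} C_{d'}$.

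I would build the $\{C_d\}$ inductively along a linear extension of the divisor lattice (for instance, in increasing numerical order). At step $d$, the union $A := \bigcup_{d' \mid d,\, d' \neq d} C_{d'}$ already contains exactly $d - \phi(d)$ elements by the inductive intersection formula, and one adjoins $\phi(d)$ new elements of $E_d(G)$ that do not appear in any previously built $C_{d''}$. Tagging each element of $G$ by its ``first-appearance'' step (the least $d$ with $g \in C_d$) yields $g \in C_{d'} \iff \mathrm{first}(g) \mid d'$, which renders the intersection property automatic. The main obstacle is the size count at each step: Frobenius gives $|E_d(G)| \geq d$, so it suffices that elements chosen in earlier steps occupy at most $d - \phi(d)$ positions in $E_d(G)$. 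In the ideal world where every new element at an earlier step $d'''$ has order exactly $d'''$, old elements land in $E_d(G)$ only when $d''' \mid d$ and the bound is tight; the difficulty is that $G$ may possess fewer than $\phi(d''')$ elements of order $d'''$, forcing the use of lower-order elements that can then intrude into $E_d(G)$ at later steps incomparable with $d'''$. Showing that the Frobenius surplus $|E_d(G)| - d$ always absorbs such intrusions -- perhaps via an auxiliary Hall-type argument controlling the selection within each step -- is the crux of the proof.
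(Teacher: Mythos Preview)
The paper does not actually prove this theorem: it is stated as Amiri's result, with a citation to~\cite{amiri-strongdom} (and a related earlier partial result of Ladisch~\cite{ladisch} appears in the bibliography). So there is no ``paper's own proof'' to compare against; the authors are simply quoting a recent theorem from the literature.

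As to your argument itself: the Hall--marriage reduction is correct and is indeed the natural framework, and your reduction of Hall's condition to downward-closed subsets $T$ of the divisor lattice is clean and right. You are also correct that the principal-ideal case is exactly Frobenius' theorem. But you have not given a proof: you explicitly flag that the crux --- controlling the ``intrusions'' when building the sets $C_d$ --- is left open. That gap is genuine and is precisely where the difficulty of the theorem lives. In your inductive construction, when $G$ has fewer than $\phi(d'')$ elements of exact order $d''$, the substitutes you are forced to pick have order properly dividing $d''$, and such an element can lie in $E_d(G)$ for a later $d$ with $d''\nmid d$; there is no obvious reason the Frobenius surplus $|E_d(G)|-d$ should compensate for all such intrusions across all earlier incomparable steps simultaneously. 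Your parenthetical suggestion of ``an auxiliary Hall-type argument controlling the selection within each step'' is a plausible direction, but it is not an argument, and making it one is essentially the content of Amiri's paper. In short: sound setup, honest identification of the obstacle, but the proposal stops exactly at the hard part.
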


We use strong domination in the proof of a theorem later in the
paper (Theorem~\ref{thm:non-nil-min-os-existence}). But it might warrant
further investigation. It is not equivalent to domination as previously
defined. The group $\Z_3:\Z_4$ of order $12$, where the generator of $\Z_4$
conjugates the generator of $\Z_3$ to its inverse, dominates the alternating
group $A_4$, but does not strongly dominate it: their order sequences are
respectively
\begin{eqnarray*}
&&(1, 2, 3, 3, 4, 4, 4, 4, 4, 4, 6, 6)\\
&\hbox{and}&(1, 2, 2, 2, 3, 3, 3, 3, 3, 3, 3, 3)
\end{eqnarray*}
(these are \texttt{SmallGroup(12,$i$)} for $i=1$ and $i=3$ in the
\textsf{GAP} library~\cite{gap}.)
        
We define two operations on non-decreasing sequences $x$ and $y$. If these
sequences have lengths $m$ and $n$ respectively, then $xy$ is the sequence
formed by taking all products $x_iy_j$ for $1\le i\le m$ and $1\le j\le n$ and
writing them in non-decreasing order. Similarly, $x\vee y$ is obtained by
taking all numbers $\lcm\{x_i,y_j\}$ for $1\le i\le m$ and $1\le j\le n$
and writing them in non-decreasing order.
        
\begin{proposition}
\begin{enumerate}
\item
For two non-decreasing sequences $x$ and $y$ of lengths $m$ and $n$
respectively, $xy=x\vee y$ if and only if $x_i$ and $y_j$ are coprime for
$1\le i\le m$ and $1\le j\le n$.
\item
Suppose that $x_i\mid m$ for all $i$, and $y_j\mid n$ for all $j$, where
$\gcd(m,n)=1$. Suppose further that each of the sequences $x$ and $y$ contains
a unique term equal to $1$. Then the product sequence $xy$ and the numbers $m$
and $n$ uniquely determine $x$ and $y$.
\end{enumerate}
\end{proposition}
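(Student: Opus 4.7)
The plan is to handle the two parts separately, both resting on the identity $x_iy_j = \lcm(x_i,y_j)\cdot\gcd(x_i,y_j)$ together with the unique factorization of a divisor of $mn$ into coprime parts when $\gcd(m,n)=1$.

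For part~(a), observe that $x_iy_j \ge \lcm(x_i,y_j)$, with equality exactly when $\gcd(x_i,y_j)=1$. If all pairs are coprime, the multisets $\{x_iy_j : i,j\}$ and $\{\lcm(x_i,y_j) : i,j\}$ coincide term by term at the same index, so after sorting $xy = x\vee y$. For the converse I would compare total sums: the sum of the entries of $xy$ is $\bigl(\sum_i x_i\bigr)\bigl(\sum_j y_j\bigr) = \sum_{i,j} x_iy_j$, while the sum of the entries of $x\vee y$ is $\sum_{i,j}\lcm(x_i,y_j)$. Since the former dominates the latter termwise, equality of the two sorted sequences forces equality of sums, which in turn forces $\gcd(x_i,y_j)=1$ for every pair.

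For part~(b), I would exploit the coprimality of $m$ and $n$ to split each entry of $xy$ canonically. For any divisor $z$ of $mn$, let $\pi_m(z)$ (resp.\ $\pi_n(z)$) denote the largest divisor of $z$ all of whose prime factors divide $m$ (resp.\ $n$); because $\gcd(m,n)=1$, we have $z = \pi_m(z)\pi_n(z)$, and this is the unique way to write $z$ as a product of a divisor of $m$ and a divisor of $n$. Applied to a term $z = x_iy_j$ of the product sequence this gives $\pi_m(z) = x_i$ and $\pi_n(z) = y_j$, so running both projections over every entry of $xy$ recovers the full multiset of pairs $\{(x_i,y_j) : i,j\}$ from the data $(xy, m, n)$.

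It then remains to disentangle $x$ and $y$ from this multiset of pairs, and here the hypothesis that each of $x$ and $y$ contains a unique entry equal to $1$ is essential. Namely, the pairs with first coordinate $1$ are exactly the $|y|$ pairs $(1, y_j)$ arising from the unique $i$ with $x_i=1$, so their second coordinates, listed with multiplicity and sorted, reproduce $y$; the symmetric observation recovers $x$. The main technical point, such as it is, is the unique-factorization step $z = \pi_m(z)\pi_n(z)$ that makes the projections invert the product map on pairs of divisors of $m$ and $n$; the rest of the argument is pure bookkeeping.
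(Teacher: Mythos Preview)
Your proof is correct and rests on the same coprimality idea as the paper's. For (a) the paper simply declares the claim obvious; your sum-comparison argument for the converse direction is a clean way to make it explicit. For (b) the paper's proof is a single line: $x$ consists exactly of the terms of $xy$ coprime to $n$, and $y$ of those coprime to $m$. Your projection-and-pair-recovery machinery is correct but collapses to this same observation, since a term $z=x_iy_j$ satisfies $\pi_n(z)=1$ precisely when $z$ is coprime to $n$, which (by $y_j\mid n$) forces $y_j=1$; the uniqueness of that $j$ then gives back $x$ directly. So the intermediate step of reconstructing the full multiset of pairs $(x_i,y_j)$ is not needed.
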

        
\begin{proof}
The proof of (a) is obvious. For (b), we see that $x$ consists of the terms of
$xy$ coprime to $n$, and $y$ consists of the terms coprime to $m$.
\end{proof}
        
\begin{theorem}\label{t:sequence-product}
\begin{enumerate}
\item For any two groups $G$ and $H$, $\os(G\times H)=\os(G)\vee\os(H)$. In
particular, $\os(G\times H)=\os(G)\os(H)$ if and only if $|G|$ and $|H|$ are
coprime.
\item
Assume that $\gcd(|G|,|H|)=1$. Given $\os(G\times H)$ and the numbers $|G|$
and $|H|$, the sequences $\os(G)$ and $\os(H)$ are determined.
\item If $|G|$ and $|H|$ are not coprime, then there is no group $K$ for
which $\os(G)\os(H)=\os(K)$.
\end{enumerate}
\end{theorem}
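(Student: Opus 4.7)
The plan is to dispatch parts (a) and (b) as quick applications of the preceding proposition, then focus the real effort on (c), which rests on a counting argument using Frobenius' theorem on element orders.

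For part (a), I would start from the observation that the order of an element $(g,h)$ in the direct product $G\times H$ is $\lcm(o(g),o(h))$. Since every element of $G\times H$ has this shape, enumerating all such lcms and sorting yields exactly $\os(G)\vee\os(H)$, proving the first claim. The second claim then follows from part (a) of the preceding proposition: $\os(G)\os(H)=\os(G)\vee\os(H)$ iff every element order in $G$ is coprime to every element order in $H$. If $\gcd(|G|,|H|)=1$ then element orders are automatically coprime, since they divide the respective group orders; conversely, if a prime $p$ divides both $|G|$ and $|H|$, Cauchy's theorem produces elements of order $p$ in each, destroying coprimality.

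For part (b), I would simply invoke part (b) of the preceding proposition with $m=|G|$, $n=|H|$, $x=\os(G)$, $y=\os(H)$: both sequences contain a unique $1$ (the identity), and each element order divides the respective group order, so the hypotheses are met and the recipe given there recovers $\os(G)$ and $\os(H)$ from $\os(G\times H)$.

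Part (c) is the substantive step. Assume for contradiction that a group $K$ satisfies $\os(K)=\os(G)\os(H)$; comparing lengths forces $|K|=|G|\cdot|H|$. Pick a prime $p$ dividing $\gcd(|G|,|H|)$; then $p$ divides $|K|$ as well. I would then count occurrences of the value $p$ on both sides. In $\os(G)\os(H)$, a product $x_iy_j$ equals the prime $p$ iff one factor is $1$ and the other is $p$, so the count is $n_p(G)+n_p(H)$, writing $n_p(\cdot)$ for the number of elements of order $p$; in $\os(K)$, the count is $n_p(K)$. Frobenius' theorem (the number of solutions of $x^p=1$ is a multiple of $p$ when $p$ divides the group order) implies that each of $1+n_p(G)$, $1+n_p(H)$, $1+n_p(K)$ is divisible by $p$. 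Reducing the forced equation $n_p(G)+n_p(H)=n_p(K)$ modulo $p$ then gives $-2\equiv -1\pmod p$, i.e.\ $p\mid 1$, a contradiction. The only delicacy is ensuring Frobenius applies at all three of $G,H,K$, which is automatic once $p$ is chosen to divide $\gcd(|G|,|H|)$, so I do not foresee any serious obstacle beyond arranging the count cleanly.
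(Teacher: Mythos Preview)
Your proposal is correct and follows essentially the same route as the paper: parts (a) and (b) are immediate from the lcm formula for orders in a direct product together with the preceding proposition, and part (c) is exactly the paper's counting argument modulo $p$, with your citation of Frobenius' theorem playing the role of the paper's appeal to ``the proof of Cauchy's Theorem'' (both yield $n_p \equiv -1 \pmod p$). Your write-up is somewhat more explicit about why a product $x_iy_j$ equals the prime $p$ only when one factor is $1$, which is a helpful detail the paper leaves implicit.
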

        
\begin{proof} (a) This  follows since $o((g,h))=\lcm\{o(g),o(h)\}$.

\medskip

(b) This is immediate from the preceding Proposition.
        
\medskip
        
(c) Suppose that $|G|$ and $|H|$ are not coprime, and let $p$ be a prime
number dividing both $|G|$ and $|H|$. The number of elements of order $p$ 
in $G$ is congruent to $-1$ (mod~$p$). (This follows from the proof of
Cauchy's Theorem that $G$ contains elements of order~$p$.) Similarly for $H$.
So the number of terms in $\os(G)\os(H)$ equal to $p$ is congruent to $-2$
(mod~$p$). Hence $\os(G)\os(H)$ cannot be the order sequence of a group.
\end{proof}

   The following corollary is immediate from Theorem \ref{t:sequence-product}. 
   
   \begin{corollary}
   	\label{c:seq-product}
   	Let $G,H$ be two finite groups with $\text{gcd}(|G|,|H|)=1$. Then, $\rho(G\times H)= \rho(G)^{|H|} \rho(H)^{|G|}.$ 	
   \end{corollary}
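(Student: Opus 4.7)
The plan is to unpack both sides as products of entries of appropriate sequences and then invoke Theorem \ref{t:sequence-product}(a) to identify those sequences.

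First I would observe that $\rho(K)$ is nothing other than the product of the entries of the order sequence $\os(K)$, for any finite group $K$. Applied to $K = G \times H$ this gives $\rho(G \times H) = \prod_{(g,h) \in G \times H} o((g,h))$. Since $\gcd(|G|,|H|) = 1$, every element order $o(g)$ divides $|G|$ and every $o(h)$ divides $|H|$, so $\gcd(o(g), o(h)) = 1$ and hence $o((g,h)) = \lcm(o(g), o(h)) = o(g)\, o(h)$. Equivalently, by Theorem \ref{t:sequence-product}(a), $\os(G \times H) = \os(G)\,\os(H)$ under the coprimality hypothesis, so the multiset of entries of $\os(G \times H)$ is exactly $\{x_i y_j : 1 \le i \le |G|,\ 1 \le j \le |H|\}$, where $x_i$ and $y_j$ are the entries of $\os(G)$ and $\os(H)$ respectively.

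Next I would simply compute the product of these $|G|\cdot|H|$ terms:
\[
\rho(G \times H) \;=\; \prod_{i=1}^{|G|} \prod_{j=1}^{|H|} x_i y_j
\;=\; \Bigl(\prod_{i=1}^{|G|} x_i\Bigr)^{|H|} \Bigl(\prod_{j=1}^{|H|} y_j\Bigr)^{|G|}
\;=\; \rho(G)^{|H|}\,\rho(H)^{|G|},
\]
which is the claimed identity.

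There is no real obstacle here: everything reduces to the coprime multiplicativity of orders in a direct product, which is packaged in Theorem \ref{t:sequence-product}(a), together with a one-line bookkeeping of exponents in the double product. The only small care needed is to note that the factorisation $o((g,h)) = o(g)\,o(h)$ uses coprimality of the individual element orders (not just of $|G|$ and $|H|$), which is precisely what the hypothesis supplies.
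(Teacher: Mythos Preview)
Your proof is correct and matches the paper's approach: the paper simply states that the corollary is immediate from Theorem~\ref{t:sequence-product}, and what you have written is exactly the unpacking that makes this immediacy explicit.
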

        
\begin{theorem}\label{t:extension}
Let $G$ and $H$ be groups of coprime order, and suppose that $G$ is
abelian. Let $K$ be any extension of $G$ by $H$. Then
$\os(G\times H)=\os(G)\os(H)$ strongly dominates $\os(K)$.
\end{theorem}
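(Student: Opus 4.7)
The plan is to use the coprimality of $|G|$ and $|H|$ together with the normality of $G$ to apply the Schur--Zassenhaus theorem: since $G$ is a normal Hall subgroup of $K$, there is a complement $H_0 \le K$ isomorphic to $H$, so $K \cong G \rtimes_\varphi H$ for some action $\varphi: H \to \mathrm{Aut}(G)$. Every element of $K$ then has a unique expression as a product $gh$ with $g \in G$ and $h \in H_0 \cong H$, so the map $f: G\times H \to K$ defined by $f(g,h) = gh$ is a bijection of sets.

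To verify strong domination via this bijection, I need to show that $o(gh) \mid o((g,h)) = \lcm(o(g),o(h)) = o(g)o(h)$, where the last equality uses coprimality. Writing $\sigma = \varphi(h) \in \mathrm{Aut}(G)$ and $m = o(h)$, the standard semidirect-product expansion gives
\[
(gh)^n = \bigl(g \cdot \sigma(g) \cdot \sigma^2(g) \cdots \sigma^{n-1}(g)\bigr) \cdot h^n,
\]
and in particular $(gh)^m = N(g)$ lies in $G$, where $N(g) := \prod_{i=0}^{m-1} \sigma^i(g)$.

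The key step, and the only place where abelianness of $G$ is used, is the following: each factor $\sigma^i(g)$ has order equal to $o(g)$ (automorphisms preserve order), so in the abelian group $G$ the product $N(g)$ has order dividing $o(g)$. Hence $(gh)^{m \cdot o(g)} = N(g)^{o(g)} = 1$, giving $o(gh) \mid o(h) \cdot o(g)$, as required. Combining with Theorem~\ref{t:sequence-product}(a), the bijection $f$ witnesses that $\os(G\times H) = \os(G)\os(H)$ strongly dominates $\os(K)$.

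I do not expect a real obstacle here: Schur--Zassenhaus delivers the semidirect-product structure for free, and the order computation is a direct expansion. The one subtlety worth flagging is that abelianness of $G$ is genuinely needed to bound $o(N(g))$ by $o(g)$; in a nonabelian normal subgroup the product of conjugates of $g$ can have order exceeding $o(g)$, so the argument would break down and the statement itself would be expected to fail.
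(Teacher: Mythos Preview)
Your proof is correct and follows essentially the same route as the paper: apply Schur--Zassenhaus to obtain a complement and hence the bijection $(g,h)\mapsto gh$, then expand $(gh)^{o(h)}$ as a product of conjugates of $g$ inside the abelian group $G$ to conclude $o(gh)\mid o(g)o(h)$. The only cosmetic difference is that you phrase the conjugation via the action $\varphi:H\to\mathrm{Aut}(G)$ while the paper writes the conjugates as $h^igh^{-i}$ directly.
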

        
\begin{proof} According to the Schur--Zassenhaus Theorem, $G$ has a
complement in $K$; that is, there is a subgroup of $K$, which we identify
with $H$, such that $G\cap H=\{1\}$ and $GH=K$. (Since $G$ is abelian, only
Schur's part of the proof is required.) Thus every element of $K$
is uniquely written as $gh$ for $g\in G$ and $h\in H$. So the map 
$f:G\times H\to K$ defined by $f((g,h))=gh$ is a bijection. By
Proposition~\ref{p:bijection}, it suffices to show that $o(g)o(h)\ge o(gh)$.
        
Suppose that $o(g)=m$ and $o(h)=n$. We will prove that $(gh)^{mn}=1$, from
which the result follows.
        
We have
\[(gh)^n=g.hgh^{-1}.h^2gh^{-2}\cdots h^{n-1}gh^{-(n-1)}h^n,\]
and we have $h^n=1$. Since $G$ is a normal subgroup of $K$, the elements
$g$, $hgh^{-1}$, \dots, $h^{n-1}gh^{-(n-1)}$ all belong to $G$, and all have 
order $m$, since they are conjugate to $g$. In an abelian group, the product
of elements of order $m$ has order dividing $m$; so $((hg)^n)^m=1$, and the
proof is complete.
\end{proof} 
        
Now we give a couple of results on direct products.

\begin{proposition}\label{p:products}
Suppose that $G_1$, $G_2$ and $H$ are groups such that $|G_1|=|G_2|$ and
$\os(G_1)$ dominates $\os(G_2)$. Suppose that either
\begin{enumerate}
\item $|G_1|$ and $|H|$ are coprime; or
\item $\os(G_1)$ strongly dominates $\os(G_2)$.
\end{enumerate}
Then $\os(G_1\times H)$ dominates $\os(G_2\times H)$.
\end{proposition}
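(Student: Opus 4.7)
The plan is to use the bijection characterization of domination provided by Proposition~\ref{p:bijection}: the hypothesis gives a bijection $f:G_1\to G_2$ with $o(f(g))\le o(g)$ for every $g\in G_1$, and in case (b) this bijection in fact satisfies $o(f(g))\mid o(g)$. The obvious thing to try is the product bijection $F:G_1\times H\to G_2\times H$ defined by $F(g,h)=(f(g),h)$, using the identity on $H$; this is clearly a bijection, and by Proposition~\ref{p:bijection} it suffices to show that $o((g,h))\ge o(F(g,h))$ for every $(g,h)$.

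Since $o((g,h))=\lcm(o(g),o(h))$ and $o(F(g,h))=\lcm(o(f(g)),o(h))$, the entire task reduces to verifying the inequality $\lcm(o(g),o(h))\ge \lcm(o(f(g)),o(h))$. In case (b), $o(f(g))\mid o(g)$ gives at once $\lcm(o(f(g)),o(h))\mid \lcm(o(g),o(h))$, so the inequality is automatic (and in fact we obtain strong domination of $\os(G_1\times H)$ over $\os(G_2\times H)$). In case (a), coprimality of $|G_1|=|G_2|$ with $|H|$ forces both $o(g)$ and $o(f(g))$ to be coprime to $o(h)$, so the two lcms collapse to the products $o(g)\cdot o(h)$ and $o(f(g))\cdot o(h)$, and the inequality $o(g)\ge o(f(g))$ immediately yields the result.

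There is no real obstacle: the proof is essentially immediate once one writes down the product bijection. What is worth underlining is why the two hypotheses are split into separate cases, and why a common generalization is not to be expected from this approach. Without either coprimality or divisibility, the product bijection $F=(f,\mathrm{id}_H)$ can fail to witness domination: for example, if $o(g)=6$, $o(f(g))=4$, and $o(h)=6$, then $\lcm(o(g),o(h))=6<12=\lcm(o(f(g)),o(h))$, even though $o(f(g))\le o(g)$. Hence either coprimality (killing the lcm) or divisibility (making lcms monotone) is exactly what is needed for this simple bijection to work.
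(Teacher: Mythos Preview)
Your proof is correct and follows essentially the same approach as the paper's: both use the product bijection $(g,h)\mapsto(f(g),h)$ and verify the order inequality via coprimality (reducing lcms to products) in case~(a) and via divisibility of lcms in case~(b). The paper's argument is slightly terser, but the content is identical, and your closing observation about why neither hypothesis can be dropped is a nice addition.
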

    
\begin{proof}
Let $f$ be a bijection from $G_1$ to $G_2$ satisfying $o(g)\ge o(f(g))$ (or,
in case (b), $o(f(g))\mid o(g)$). Then define a bijection
$f':G_1\times H\to G_2\times H$ by the rule that $f'((g,h))=(f(g),h)$.
\begin{enumerate}
\item If $|G_1|$ and $|H|$ are coprime, then
\[o((g,h))=o(g)o(h)\ge o(f(g))o(h)=o((f(g),h)),\]
so by Proposition~\ref{p:bijection}, $\os(G_1\times H)$ dominates
$\os(G_2\times H)$.
\item If $f$ satisfies the conditions for strong domination, then
\[o((f(g),h))=\lcm(o(f(g)),o(h))\mid\lcm(o(g),o(h))=o((g,h)),\]
and again Proposition~\ref{p:bijection} gives the result -- indeed we conclude
that the domination is strong.
\end{enumerate}
\end{proof}

From this we can prove a two-sided version:

\begin{proposition}
Let $G_1$ and $G_2$ be groups of the same order, and let $H_1$ and $H_2$ be
groups of the same order. Suppose that $\os(G_1)$ dominates $\os(G_2)$ and
$\os(H_1)$ dominates $\os(H_2)$. Moreover, suppose that one of the following
holds:
\begin{enumerate}
\item $|G_1|$ and $|H_1|$ are coprime;
\item $\os(G_1)$ strongly dominates $\os(G_2)$ and $\os(H_1)$ strongly
dominates $\os(H_2)$.
\end{enumerate}
Then $\os(G_1\times H_1)$ dominates $\os(G_2\times H_2)$.
\end{proposition}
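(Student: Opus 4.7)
The plan is to chain two applications of Proposition~\ref{p:products} through an intermediate group $G_2\times H_1$ (which has the same order as both $G_1\times H_1$ and $G_2\times H_2$), and then invoke transitivity of the (strong) domination relation.

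For case (a), I would first observe that since $|G_1|=|G_2|$ and $|H_1|=|H_2|$, the coprimality hypothesis $\gcd(|G_1|,|H_1|)=1$ implies $\gcd(|G_2|,|H_1|)=1$ and $\gcd(|G_2|,|H_2|)=1$. Applying part~(a) of Proposition~\ref{p:products} with $H=H_1$ and the domination $\os(G_1)\ge\os(G_2)$, I get that $\os(G_1\times H_1)$ dominates $\os(G_2\times H_1)$. Applying the same proposition again, now with the roles of the factors swapped (using $H_1\times G_2\cong G_2\times H_1$ and $H_2\times G_2\cong G_2\times H_2$, and with $\os(H_1)\ge\os(H_2)$ and $\gcd(|H_1|,|G_2|)=1$), I obtain that $\os(G_2\times H_1)$ dominates $\os(G_2\times H_2)$. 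Transitivity of the domination order finishes the argument.

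For case (b), I would run the same two-step chain, but now appealing to part~(b) of Proposition~\ref{p:products} at each step. This gives that $\os(G_1\times H_1)$ strongly dominates $\os(G_2\times H_1)$, which in turn strongly dominates $\os(G_2\times H_2)$. Since strong domination is transitive (if $f:G_1\times H_1\to G_2\times H_1$ and $f':G_2\times H_1\to G_2\times H_2$ are bijections witnessing strong domination, then $o(f'(f(x)))\mid o(f(x))\mid o(x)$, so $f'\circ f$ witnesses strong domination from $G_1\times H_1$ to $G_2\times H_2$), we conclude strong domination, and in particular ordinary domination.

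There is no real obstacle here: once Proposition~\ref{p:products} is in hand, the only substantive point is that (strong) domination is transitive, which is immediate, and that the coprimality hypothesis transfers across the chain because the orders of the factors are preserved. The proof is essentially a two-line chaining argument, and in case~(b) we even get the stronger conclusion that $\os(G_1\times H_1)$ strongly dominates $\os(G_2\times H_2)$.
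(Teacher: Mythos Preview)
Your proof is correct and matches the paper's approach exactly: the paper simply states that the result ``follows immediately from two applications of the preceding result,'' and your chaining through $G_2\times H_1$ with transitivity is precisely that. Your observation that case~(b) actually yields strong domination is also noted (implicitly) in the paper's proof of Proposition~\ref{p:products}(b).
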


This follows immediately from two applications of the preceding result.

\section{Abelian groups and the partition lattice}
\label{sec:abelian-groups} 
\iffalse 
Our first goal in this section is to show that an abelian group is uniquely determined by its order sequence.

Let $P_1,\ldots,P_k$ be the Sylow subgroups of the abelian group $G$. Then
$G=P_1\times\cdots\times P_k$, and Theorem~\ref{t:sequence-product}(b) shows
that $\os(P_1)$, \dots, $\os(P_k)$ determine and are determined by $\os(G)$.
So it suffices to prove the theorem for abelian $p$-groups, where $p$ is prime.
\fi

We start this section by describing the order sequence of a finite abelian $p$-group. Let 
$|G|=p^n$ where $p$ is prime and $G$ is abelian. By the Fundamental Theorem of
Abelian Groups, 
$$G\cong \Z_{p^{r_1}} \times \Z_{p^{r_2}} \times  \dots \times \Z_{p^{r_k}},$$
where $r_i \geq 1$ for $1 \leq i \leq k$, and $r_1\geq r_2\geq\dots \geq r_k$.
Then $r_1+r_2+\cdots+r_k=n$, so $(r_1,r_2,\ldots,r_k)$ is a partition of $n$.
(We examine partitions further below.) Define $s_1,s_2,\ldots,s_\ell$, where
$\ell=r_1$, by the rule that
\[s_j=|\{i:r_i\ge j\}|.\] 
Thus, for example, $s_1=k$.

We note that the numbers $s_j$ in turn determine the numbers $r_i$: we will see
the exact relationship shortly.

\begin{proposition}\label{p:orders}
With the above notation, the number of elements of order dividing $p^j$ in $G$
is $p^{s_1+\cdots+s_j}$.
\end{proposition}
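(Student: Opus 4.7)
The plan is to count in the obvious factorwise fashion and then re-express the resulting exponent via the conjugate partition. First I would observe that an element $(a_1,\ldots,a_k) \in \Z_{p^{r_1}}\times\cdots\times\Z_{p^{r_k}}$ has order dividing $p^j$ if and only if each coordinate $a_i$ has order dividing $p^j$ in $\Z_{p^{r_i}}$. Since $\Z_{p^{r_i}}$ is cyclic, its unique subgroup of elements whose order divides $p^j$ has order $p^{\min(j,r_i)}$. Multiplying across the factors, the total count of elements of order dividing $p^j$ in $G$ is
\[
\prod_{i=1}^{k} p^{\min(j,r_i)} \;=\; p^{\sum_{i=1}^{k}\min(j,r_i)}.
\]

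The remaining task is purely combinatorial: show that $\sum_{i=1}^{k}\min(j,r_i) = s_1+s_2+\cdots+s_j$. This is the standard identity expressing a partition sum via its conjugate. I would prove it by a double-counting argument: the quantity $\min(j,r_i)$ equals the number of integers $t$ with $1\le t\le j$ and $t\le r_i$, so
\[
\sum_{i=1}^{k}\min(j,r_i) \;=\; \sum_{i=1}^{k}\,|\{t: 1\le t\le j,\ r_i\ge t\}| \;=\; \sum_{t=1}^{j}|\{i:r_i\ge t\}| \;=\; \sum_{t=1}^{j} s_t,
\]
by swapping the order of summation. Substituting this into the exponent gives exactly $p^{s_1+\cdots+s_j}$, as required.

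There is no real obstacle here; the only thing worth flagging is the implicit identification of $(s_1,\ldots,s_\ell)$ as the conjugate partition of $(r_1,\ldots,r_k)$, which is what makes the exponent identity transparent and which will be convenient in the sequel when the authors pass from the sequence of $r_i$'s to the sequence of $s_j$'s. Once the Fubini-style swap of sums is in place, the proof is a one-line assembly of the two ingredients above.
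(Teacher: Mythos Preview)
Your proof is correct. The approach, however, differs from the paper's. You work directly with the direct product decomposition: count elements of order dividing $p^j$ factor by factor to obtain the exponent $\sum_{i}\min(j,r_i)$, and then convert this to $s_1+\cdots+s_j$ via a Fubini-style double count (i.e., the conjugate-partition identity). The paper instead argues inductively through the filtration $A_0\le A_1\le\cdots$, where $A_j=\{g:g^{p^j}=1\}$, showing that the quotient $A_{j+1}/A_j$ is elementary abelian of rank $s_{j+1}$ and hence $|A_{j+1}|=p^{s_{j+1}}|A_j|$. Your route is shorter and makes the appearance of the conjugate partition explicit from the outset, which dovetails nicely with the later identification of $(s_1,\ldots,s_\ell)$ as the conjugate of $(r_1,\ldots,r_k)$; the paper's filtration argument, by contrast, isolates exactly the incremental contribution $p^{s_{j+1}}$ at each step, which is the form one wants when reading off the number of elements of each exact order.
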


\begin{proof}
The elements with order dividing $p^j$ form a subgroup $A_j$ of $A$. Now
$A_j\le A_{j+1}$, and $A_{j+1}/A_j$ consists of the elements of order 
$1$ or $p$ in $A/A_j$; this group is generated by the cosets containing
elements of order $p^{j+1}$, which has rank $s_{j+1}$, and so its cardinality
is $p^{s_{j+1}}$. So $|A_{j+1}|=p^{s_{j+1}}|A_j|$. Since $A_0$ is the identity
group, induction now completes the proof of the Proposition.
\end{proof}

Using this, we immediately have the following corollary which is also proved in \cite{ronald}.

\begin{theorem}\label{t:abelian}
Two finite abelian groups have the same order sequence if and only if they are
isomorphic.
\end{theorem}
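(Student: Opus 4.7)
The ``if'' direction is obvious, so the task is to show that the order sequence of a finite abelian group determines it up to isomorphism. My plan is to reduce to the $p$-group case and then extract the partition from Proposition~\ref{p:orders}.

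First, let $G$ be abelian with Sylow subgroups $P_1,\dots,P_k$ for the primes $p_1,\dots,p_k$ dividing $|G|$. Since $G\cong P_1\times\cdots\times P_k$ and the Sylow subgroups have pairwise coprime orders, Theorem~\ref{t:sequence-product}(b) (applied iteratively) shows that $\os(G)$ together with the numbers $|P_i|$ determines each $\os(P_i)$. The orders $|P_i|$ are themselves recovered from $\os(G)$: $|P_i|$ equals the number of entries of $\os(G)$ that are powers of $p_i$ (equivalently, the number of entries coprime to $|G|/|P_i|$). So $\os(G)$ determines the multiset $\{\os(P_i)\}$, and it suffices to prove the theorem for abelian $p$-groups.

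Now suppose $|G|=p^n$ with $G\cong\Z_{p^{r_1}}\times\cdots\times\Z_{p^{r_k}}$, $r_1\ge\cdots\ge r_k\ge 1$. From $\os(G)$ I can read off, for each $j\ge 0$, the number $N_j$ of elements whose order divides $p^j$. By Proposition~\ref{p:orders}, $N_j=p^{s_1+\cdots+s_j}$, where $s_j=|\{i:r_i\ge j\}|$. Taking successive quotients, $\os(G)$ determines each $s_j$, and hence the whole sequence $(s_1,s_2,\ldots,s_{r_1})$. Since $(s_1,s_2,\ldots)$ is the conjugate of the partition $(r_1,\ldots,r_k)$, it determines $(r_1,\ldots,r_k)$ uniquely (explicitly, $r_i=|\{j:s_j\ge i\}|$). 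By the Fundamental Theorem of Abelian Groups this partition determines $G$ up to isomorphism.

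There is no real obstacle here: the reduction to $p$-groups rests on part~(b) of Theorem~\ref{t:sequence-product}, and the $p$-group case is essentially the observation that Proposition~\ref{p:orders} lets us recover the conjugate partition from the order-divisibility counts. The only point to state carefully is that each $N_j$ is indeed visible in the order sequence --- it is just the position of the last entry equal to $p^j$.
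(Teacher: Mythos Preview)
Your proof is correct and follows essentially the same route as the paper: reduce to $p$-groups via Theorem~\ref{t:sequence-product}(b), then use Proposition~\ref{p:orders} to read off the $s_j$ and hence the defining partition $(r_1,\ldots,r_k)$. You spell out the reduction step (recovering $|P_i|$ from $\os(G)$) in more detail than the paper does, but the argument is the same.
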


\begin{proof}
The reverse implication is clear. For the forward implication, it suffices to
prove the result for groups of prime power order. By Proposition~\ref{p:orders},
the order sequence determines the numbers $s_1,\ldots,s_{\ell}$, and hence the
numbers $r_i$, and hence the isomorphism type of the group.
\end{proof}

As we have already seen, we cannot expect such a result for a wider class of
groups containing the class of abelian groups; even for $p$-groups with $p$ an
odd prime, it is not true.

We can also count the numbers of elements, or cyclic subgroups, of given order:
\begin{proposition}
With the notation introduced before Proposition~\ref{p:orders},
\begin{enumerate}
\item the number of elements of order $p^j$ is
$p^{s_1+\cdots+s_{j-1}}(p^{s_j}-1)$;
\item the number of cyclic subgroups of order $p^j$ is
$p^{s_1+\cdots+s_{j-1}-j+1}(p^{s_j}-1)/(p-1)$;
\item the total number of cyclic subgroups is
\[1+p^{s_1-1}+\cdots+p^{s_1+\cdots+s_{m-1}-m+1}+(p^{n-m+1}-1)/(p-1)\]
where $|G|=p^n$ and the exponent of $G$ is $p^m$.
\end{enumerate}
\end{proposition}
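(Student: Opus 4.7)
The plan is to prove the three parts in sequence, deriving each from its predecessor together with standard counting.

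For part (a), I would identify the elements of order exactly $p^j$ with the set difference $A_j\setminus A_{j-1}$, where $A_j$ is the subgroup consisting of elements whose order divides $p^j$. By Proposition~\ref{p:orders}, this count is $p^{s_1+\cdots+s_j}-p^{s_1+\cdots+s_{j-1}}$, and factoring out the smaller power of $p$ gives the stated formula.

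Part (b) is immediate from part (a): each cyclic subgroup of order $p^j$ has $\phi(p^j)=p^{j-1}(p-1)$ generators, and two distinct cyclic subgroups of order $p^j$ share no elements of order $p^j$. Dividing the count from (a) by $p^{j-1}(p-1)$ yields the claimed formula.

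For part (c), I would sum the count in (b) across $j=1,\ldots,m$ and add $1$ for the trivial subgroup. Writing $a_j=s_1+\cdots+s_{j-1}-(j-1)$, so that $a_1=0$ and the exponent appearing in the proposition's display is exactly $a_m$, the $j$th summand expands as the geometric block
\[\frac{p^{a_j}(p^{s_j}-1)}{p-1} = p^{a_j}+p^{a_j+1}+\cdots+p^{a_j+s_j-1}.\]
The essential observation is the identity $a_{j+1}=a_j+s_j-1$, which means consecutive blocks meet at a single shared exponent; together with $a_m+s_m-1=n-m$, this shows that the blocks collectively cover $\{0,1,\ldots,n-m\}$ once while the boundary exponents $a_2,\ldots,a_m$ are each double-counted. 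Regrouping the sum therefore produces $(p^{n-m+1}-1)/(p-1)+p^{a_2}+\cdots+p^{a_m}$, which together with the extra $1$ is exactly the stated answer.

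Nothing here is conceptually difficult; the main work is the index bookkeeping in part (c), in particular verifying that the identity $a_{j+1}=a_j+s_j-1$ accounts for exactly all the repetitions between successive geometric ranges and that the endpoints match $0$ and $n-m$.
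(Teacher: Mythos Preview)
Your proposal is correct and follows essentially the same route as the paper: parts (a) and (b) match the paper's proof verbatim, and part (c) is obtained, as in the paper, by summing the formula of (b) over $j=1,\ldots,m$ and using $s_1+\cdots+s_m=n$. The only difference is that you carry out the summation explicitly via the auxiliary exponents $a_j$ and the telescoping identity $a_{j+1}=a_j+s_j-1$, whereas the paper leaves this computation to the reader; your bookkeeping is sound (note that $s_j\ge1$ for $j\le m$ guarantees $a_1\le a_2\le\cdots\le a_{m+1}$, so the concatenated geometric ranges really do cover $\{0,\ldots,n-m\}$ with exactly the boundary terms $p^{a_2},\ldots,p^{a_m}$ repeated).
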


\begin{proof}
(a) By Proposition~\ref{p:orders}, this number is
$p^{s_1+\cdots+s_j}-p^{s_1+\cdots+s_{j-1}}$.

(b) Each cyclic subgroup of order $p^j$ has $p^{j-1}(p-1)$ generators.

(c) This is obtained by summing the formulae in (b), noting that 
the maximum $j$ for which $s_j>0$ is $m$ and $s_1+\cdots+s_m=n$.
\end{proof}

\medskip

Now we relate the preceding analysis of abelian $p$-groups to the theory of
partitions of integers, referring to \cite[Section 1.1]{macdonald}.

A \emph{partition} of $n$ is a non-decreasing sequence $(r_1,\ldots,r_k)$ of
positive integers with sum $n$. Given two partitions $a=(r_1,\ldots,r_k)$
and $b=(s_1,\ldots,s_\ell)$ of $n$, to compare them we append zeros to the
shorter sequence if necessary to make them have the same length; then we
write $a\succeq b$ if, for all relevant $j$, we have
\[r_1+\cdots+r_j\ge s_1+\cdots+s_j.\]
Thus, from the top, the order begins 
\[(n)\succeq(n-1,1)\succeq(n-2,2)\succeq(n-2,1,1).\]
This is called the \emph{natural partial order} on partitions (also called
\emph{majorization}). It is not a 
total order; for example, $(2,2,2)$ and $(3,1,1,1)$ are incomparable.

A parttion $(r_1,\ldots,r_k)$ of $n$ can be represented by a \emph{Young	diagram} or \emph{Ferrers diagram}, made up of $n$ squares in the plane, in 
left-aligned rows of lengths $r_1,\ldots,r_k$. The picture shows the partitions
$(4,2,1)$ and $(3,2,1,1)$.

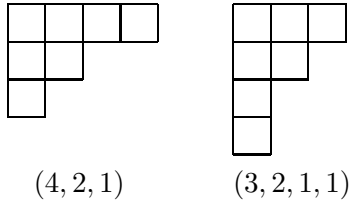
\begin{figure}[htbp]
\begin{center}
\setlength{\unitlength}{0.5mm}
\begin{picture}(90,50)
\multiput(0,50)(0,-10){2}{\line(1,0){40}}
\put(0,30){\line(1,0){20}}
\put(0,20){\line(1,0){10}}
\multiput(0,50)(10,0){2}{\line(0,-1){30}}
\put(20,50){\line(0,-1){20}}
\multiput(30,50)(10,0){2}{\line(0,-1){10}}
\put(7,0){$(4,2,1)$}
\multiput(60,50)(0,-10){2}{\line(1,0){30}}
\put(60,30){\line(1,0){20}}
\multiput(60,20)(0,-10){2}{\line(1,0){10}}
\multiput(60,50)(10,0){2}{\line(0,-1){40}}
\put(80,50){\line(0,-1){20}}
\put(90,50){\line(0,-1){10}}
\put(60,0){$(3,2,1,1)$}
\end{picture}
\end{center}
\caption{\label{f:young}Young diagrams}
\end{figure}

The \emph{conjugate} $a'$ of the partition $a=(r_1,\ldots,r_k)$ is the parttion
$b=(s_1,\ldots,s_\ell)$, where 
\[s_i=|\{j:r_j\ge i\}|.\]
This corresponds simply to reflecting the Young diagram in the diagonal. So
$a''=a$ for any partition $a$. The two partitions  in the figure are conjugates
of each other.

An abelian group $A$ of order $p^n$ has a unique expression of the form
\[A\cong\Z_{p^{r_1}}\times\cdots\times\Z_{p^{r_k}},\]
where $a=(r_1,\ldots,r_k)$ is a partition of $n$, which we will call the
\emph{defining partition} of $A$.

Our arguments in the proof of Theorem~\ref{t:abelian} show that the partition
$(s_1,s_2,\ldots,s_{\ell})$ used in the proof of that theorem is conjugate
to the defining partition of $A$. Now the fact that $a''=a$ shows that the
construction of $(r_1,\ldots,r_k)$ from $(s_1,\ldots,s_l)$ is formally identical
to the construction in the other direction.

\begin{theorem}
\label{thm:relation-with-poset-partitionlattice}
Let $A$ and $C$ be two abelian groups of order $p^n$, with defining partitions
$a$ and $c$ respectively. Then the following are equivalent:
\begin{enumerate}
\item $\os(A)$ dominates $\os(C)$;
\item $c'\succeq a'$;
\item $a\succeq c$.
\end{enumerate}
\end{theorem}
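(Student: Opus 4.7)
The plan is to show the three-way equivalence by first disposing of $(b) \Leftrightarrow (c)$ as a standard fact about partitions, and then proving $(a) \Leftrightarrow (b)$ by rephrasing domination in terms of counts of elements of bounded order.

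The equivalence $(b) \Leftrightarrow (c)$ is the classical theorem that conjugation is an order-reversing involution on the majorization poset of partitions of $n$; this is contained in Macdonald's book cited just above the theorem. I would simply invoke this, perhaps recalling in one line that if $a = (r_1,\ldots,r_k)$ and $a' = (s_1,\ldots,s_\ell)$, then $r_1 + \cdots + r_j$ can be read off the Young diagram as the number of cells in the first $j$ rows, which by conjugation equals $\sum_i \min(s_i, j)$; comparing this expression for $a$ and for $c$ yields the reversal.

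For $(a) \Leftrightarrow (b)$ I would first reformulate domination. Since $|A| = |C| = p^n$ and all element orders in $A$ and $C$ are powers of $p$, $\os(A)$ dominates $\os(C)$ if and only if, for every $j \ge 0$,
\[
|\{x \in A : o(x) \le p^j\}| \le |\{y \in C : o(y) \le p^j\}|,
\]
and in a $p$-group "order at most $p^j$" coincides with "order dividing $p^j$". Next I would apply Proposition~\ref{p:orders}: writing $a' = (s_1,\ldots,s_\ell)$ and $c' = (t_1,\ldots,t_{\ell'})$ for the conjugates of the defining partitions (the earlier text already observed that the sequence $s_j = |\{i : r_i \ge j\}|$ appearing in the proposition is precisely $a'$), the count on the left side of the displayed inequality is $p^{s_1 + \cdots + s_j}$ and the one on the right is $p^{t_1 + \cdots + t_j}$, where we pad with zeros beyond the length of each partition.

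Taking $\log_p$, the inequality becomes $s_1 + \cdots + s_j \le t_1 + \cdots + t_j$ for all $j$, which is exactly the statement that $c' \succeq a'$ in the natural partial order. This gives $(a) \Leftrightarrow (b)$, and combined with the partition-theoretic equivalence $(b) \Leftrightarrow (c)$ finishes the proof. I do not expect any real obstacle: the only bookkeeping concerns are that $a'$ and $c'$ may have different lengths (handled by the zero-padding convention already built into the definition of $\succeq$) and that the common total $s_1 + \cdots + s_\ell = t_1 + \cdots + t_{\ell'} = n$ ensures the count inequalities become equalities for $j$ beyond the exponents.
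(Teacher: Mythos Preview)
Your proposal is correct and follows essentially the same route as the paper's proof: both arguments translate domination into the inequalities on the counts $|\{x:o(x)\mid p^j\}|$, invoke Proposition~\ref{p:orders} to rewrite these counts as $p^{s_1+\cdots+s_j}$ and $p^{t_1+\cdots+t_j}$ (yielding $c'\succeq a'$), and then appeal to the standard partition-theoretic fact from Macdonald for $(b)\Leftrightarrow(c)$. Your write-up is simply a more detailed version of the paper's terse argument.
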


\begin{proof} $\os(A)$ dominates $\os(C)$ if and only if, for each $i$,
the number of elements of order dividing $p^i$ is at least as large in $C$
as in $A$. By Proposition~\ref{p:orders} and the definition of the natural
partial order, the truth of this for all $i$ is equivalent to $c'\succeq a'$.
Thus (a) and (b) are equivalent. The equivalence of (b) and (c) is standard;
see \cite[(1.11)]{macdonald}.
\end{proof} 

The poset of partitions of an integer has been the subject of research covering
several areas of mathematics. See~\cite{brylawski,greene} for some of the
results. We can regard the poset of order sequences of groups as a kind of
non-commutative generalisation of this. We return briefly to this topic later.

\paragraph{Remark} The paper~\cite{greene} gives estimates for the lengths of
maximal chains in the partition lattice. It is very easy to see that the 
maximum chain length of the lattice of partitions of $n$ tends to infinity
with $n$. Now any finite poset is embeddable in a product of finite chains
(for example, take all linear extensions of the given poset). So we conclude:

\begin{proposition}
For any finite poset $P$, there exists $n$ such that $P$ is embeddable in the
poset of order sequences of abelian groups of order $n$.
\end{proposition}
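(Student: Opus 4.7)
The plan is to embed $P$ into a Boolean lattice $\{0,1\}^m$ and then realise that Boolean lattice as the poset of order sequences of abelian groups of a carefully chosen order~$n$. The first embedding is purely combinatorial; the second uses the Sylow decomposition of abelian groups together with Theorem~\ref{thm:relation-with-poset-partitionlattice}.

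First I would use the standard principal-downset map: with $P=\{x_1,\ldots,x_m\}$, send each $x_i$ to the vector $(\chi_j(x_i))_{j=1}^m\in\{0,1\}^m$, where $\chi_j(x_i)=1$ exactly when $x_j\le x_i$. Verifying that this is an order embedding into the componentwise-ordered $m$-fold product of the $2$-element chain is immediate: if $x_i\le x_k$ then any $x_j\le x_i$ satisfies $x_j\le x_k$, and conversely, the $i$-th coordinate of the image of $x_i$ is $1$, forcing $x_i\le x_k$ whenever the image of $x_i$ is dominated by that of $x_k$.

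Next I would pick $m$ distinct primes $p_1,\ldots,p_m$ and set $n=(p_1p_2\cdots p_m)^2$. Every abelian group of order $n$ is (uniquely up to isomorphism) a direct product $A_1\times\cdots\times A_m$, where each $A_i$ is one of the two abelian groups of order $p_i^2$; by Theorem~\ref{thm:relation-with-poset-partitionlattice} (or by direct inspection), $\os(\Z_{p_i^2})$ dominates $\os(\Z_{p_i}\times\Z_{p_i})$, giving a two-element chain at each prime. So the set of (isomorphism classes of) abelian groups of order $n$ is in natural bijection with $\{0,1\}^m$.

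The remaining and only mildly delicate step is to check that, under this bijection, domination of order sequences coincides with componentwise comparison in $\{0,1\}^m$. One direction is an iterated application of Proposition~\ref{p:products}(a): at the $i$-th stage we replace the $i$-th factor while keeping the remaining factors (of coprime order) fixed, and the hypothesis of the proposition is satisfied at each stage. For the converse, note that for each~$i$ and each~$j$, the elements of $A_1\times\cdots\times A_m$ of order dividing $p_i^j$ lie entirely in the $p_i$-Sylow factor; so if $\os(A_1\times\cdots\times A_m)$ dominates $\os(A'_1\times\cdots\times A'_m)$, then for each $i,j$ the number of elements of order dividing $p_i^j$ is at least as large in $A'_i$ as in $A_i$, and Proposition~\ref{p:orders} then yields that $\os(A_i)$ dominates $\os(A'_i)$. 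Composing the two embeddings gives $P\hookrightarrow\{0,1\}^m\cong$ (poset of abelian groups of order $n$), as required. The main obstacle — really the only point that requires a line of care — is this componentwise characterisation, but once it is in hand the proof is complete.
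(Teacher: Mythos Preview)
Your overall strategy is close to the paper's brief sketch --- both embed $P$ into a product of chains and then realise that product inside the abelian-group poset --- and your version is considerably more explicit than the paper's one line. However, there is a genuine gap in the converse direction of your claimed identification of $\{0,1\}^m$ with the domination poset of abelian groups of order $(p_1\cdots p_m)^2$. You assert that ``if $\os(A)$ dominates $\os(A')$, then for each $i,j$ the number of elements of order dividing $p_i^j$ is at least as large in $A'_i$ as in $A_i$''. Domination, however, is equivalent to the statement that for every integer $d$ the number of elements of order \emph{at most} $d$ in $A$ does not exceed that in $A'$; it gives no direct control over the number of elements whose order \emph{divides} a fixed prime power. (For example, with $p_1=2$ and $p_2=3$, the elements of order at most $3$ include those of order $2$, so the count you want cannot be read off from any single cumulative count.) The converse direction is not optional: without it your map $\{0,1\}^m\to(\text{abelian groups})$ is merely order-preserving, which could introduce extra comparabilities and destroy the embedding of $P$.

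A clean repair is to exploit your freedom in choosing the primes: take $p_i>(p_1\cdots p_{i-1})^2$ for each $i\ge 2$. With $d_i=p_i\,(p_1\cdots p_{i-1})^2$, an element of $A=A_1\times\cdots\times A_m$ then has order $\le d_i$ if and only if its $A_j$-component is trivial for all $j>i$ and its $A_i$-component has order dividing $p_i$ (any $p_j$ with $j>i$ already exceeds $d_i$, and $p_i^2>d_i$ by the gap condition). Hence $N_A(d_i)=(p_1\cdots p_{i-1})^2\cdot|\{a\in A_i:o(a)\mid p_i\}|$. If $A_i=\Z_{p_i}\times\Z_{p_i}$ while $A'_i=\Z_{p_i^2}$, this gives $N_A(d_i)=(p_1\cdots p_{i-1})^2p_i^2>(p_1\cdots p_{i-1})^2p_i=N_{A'}(d_i)$, contradicting domination of $\os(A)$ over $\os(A')$. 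With this adjustment your argument goes through.
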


We are next interested in the comparison between the number of cyclic subgroups of two abelian $p$-groups of order $p^n$. For a group $G$, let $\cyc(G)$ denote the number of cyclic subgroups of $G$. The following result is well-known.

\begin{lemma}
\label{lem:majorization-removable-boxes}
Let $a=(r_1, r_2, \dots, r_k)$ and $c=(s_1, s_2, \dots, s_{\ell})$ be two partitions of $n$. Then $a \succeq c$ if and only if the Young diagram for $c$ can be obtained from that of $a$ by successively moving boxes from a higher row to a lower row (one box at a time), in such a way that each intermediate step is the Young diagram of a partition of $n$. 	
\end{lemma}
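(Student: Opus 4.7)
My plan is to reduce the statement to a single-step analysis at the level of partial sums. For the direction that a sequence of box moves yields $a\succeq c$, I would note that moving a single box from row $i$ down to row $j$ with $i<j$ changes the partial sums $R_t:=r_1+\cdots+r_t$ by exactly $-1$ for $i\le t<j$ and leaves all other $R_t$ fixed. Thus each move produces a partition weakly dominated by the previous one, and transitivity of $\succeq$ gives the ``if'' direction immediately.

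For the converse I would induct on the \emph{discrepancy} $D(a,c):=\sum_t(R_t-S_t)$ (where $S_t$ denotes the corresponding partial sums of $c$), which is a non-negative integer because $a\succeq c$. When $D=0$ we have $a=c$ and the sequence of moves is empty. In the inductive step, assuming $a\neq c$, I would exhibit one legal box move producing a partition $a^*$ with $a\succeq a^*\succeq c$, so that $D(a^*,c)<D(a,c)$ and induction applies.

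The construction goes as follows. After padding the shorter sequence with zeros, let $i$ be the smallest index with $r_i\neq s_i$; dominance forces $r_i>s_i$. Let $j$ be the smallest index greater than $i$ with $r_j<s_j$, which exists because $\sum_t r_t=\sum_t s_t=n$. Naively moving a box from row $i$ to row $j$ need not give a partition, and this is the main technical point. I would instead replace $i$ by $i^*$, the largest index with $r_{i^*}=r_i$, and $j$ by $j^*$, the smallest index with $r_{j^*}=r_j$. Then $r_{i^*+1}<r_{i^*}$ and $r_{j^*-1}>r_{j^*}$, so removing a box from row $i^*$ and adding it to row $j^*$ yields a valid partition $a^*$. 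Since $r_{i^*}=r_i>s_i\ge s_{i^*}$ and $r_{j^*}=r_j<s_j\le s_{j^*}$ by monotonicity of $s$, we still have $r_{i^*}>s_{i^*}$ and $r_{j^*}<s_{j^*}$; and since $r_{i^*}>r_{j^*}$, we get $i^*<j^*$. To verify $a^*\succeq c$, I would check that on the range $i\le t<j$ one has $R_t-S_t\ge 1$: the partial differences vanish for $t<i$, jump by at least $1$ at $t=i$, and remain non-negative through $t=j-1$ by the minimal choice of $j$. Since $[i^*,j^*-1]\subseteq[i,j-1]$, reducing $R_t$ by $1$ on this range preserves $R_t\ge S_t$, and clearly $D(a^*,c)=D(a,c)-(j^*-i^*)<D(a,c)$.

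The main obstacle is exactly the legality of the single move: the adjustments $i\mapsto i^*$ and $j\mapsto j^*$ and the verification $i^*<j^*$ together with $r_{i^*}>s_{i^*}$ and $r_{j^*}<s_{j^*}$ are the substantive content, and everything else is bookkeeping on partial sums.
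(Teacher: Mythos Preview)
The paper does not prove this lemma; it is stated as ``well-known'' and quoted without argument, so there is no proof in the paper to compare against. Your argument is the standard partial-sum induction and is essentially correct.

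Two small points are worth tightening. First, you assert $r_{i^*}>r_{j^*}$ without justification; this follows because $r_i=r_j$ would give $r_j=r_i>s_i\ge s_j$, contradicting $r_j<s_j$, hence $r_i>r_j$ and so $r_{i^*}=r_i>r_j=r_{j^*}$. Second, the conditions $r_{i^*+1}<r_{i^*}$ and $r_{j^*-1}>r_{j^*}$ alone do not quite establish that $a^*$ is a valid partition when $j^*=i^*+1$: there one needs $r_{i^*}-1\ge r_{j^*}+1$, i.e.\ $r_{i^*}\ge r_{j^*}+2$. This does hold, since once $i^*<j^*$ is known you have the chain $r_{j^*}<s_{j^*}\le s_{i^*}<r_{i^*}$ of integers, forcing a gap of at least~$2$; but it deserves a sentence. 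Finally, in your verification of $a^*\succeq c$ the phrase ``remain non-negative'' is slightly off: what you need (and what the minimal choice of $j$ actually gives) is that the increments $r_t-s_t$ are non-negative on $i<t<j$, so that $R_t-S_t$ is non-decreasing there and hence stays $\ge 1$ after the jump at $t=i$.
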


\begin{lemma}
\label{lem:comparison-cyc-sub-onestep}
Let $A, A'$ and $A''$ be three abelian groups of order $p^n$ with defining partitions $a=(r_1, r_2, \dots, r_k), a'=(r_1, r_2, \dots, r_{j_1}-1, \dots, r_{j_2}+1, \dots, r_k),$ and $ a''= 
(r_1, r_2, \dots, r_{j_1}-1, \dots, r_k, 1)$ respectively. Then, $\cyc(A) < \cyc(A')$ and $\cyc(A) < \cyc(A'')$.
\end{lemma}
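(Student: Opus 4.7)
The plan is to track, for each abelian $p$-group in question, the layer sizes $|A_j|:=|\{g\in A:o(g)\mid p^j\}|$; by Proposition~\ref{p:orders} these are
\[|A_j|=p^{s_1+\cdots+s_j}=p^{\sum_{i=1}^{k}\min(r_i,j)},\]
and similarly for $A'$ and $A''$. The number of cyclic subgroups is then the telescoping sum
\[\cyc(G)=1+\sum_{j\ge1}\frac{|A_j|-|A_{j-1}|}{\phi(p^j)}=1+\sum_{j\ge1}\frac{|A_j|-|A_{j-1}|}{p^{j-1}(p-1)}.\]

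Writing $D_j:=|A'_j|-|A_j|$, note that $D_0=0$ and $D_j=0$ once $j$ exceeds the exponents of $A$ and $A'$. A single summation-by-parts, using the identity $\frac{1}{p^{j-1}(p-1)}-\frac{1}{p^j(p-1)}=\frac{1}{p^j}$, collapses the difference into
\[\cyc(A')-\cyc(A)=\sum_{j\ge1}\frac{D_j}{p^j}.\]
Thus the whole comparison is reduced to computing and signing the $D_j$.

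Since $a$ and $a'$ agree except at indices $j_1$ and $j_2$, we have
\[\sum_i\min(r'_i,j)-\sum_i\min(r_i,j)=\bigl[\min(r_{j_1}-1,j)-\min(r_{j_1},j)\bigr]+\bigl[\min(r_{j_2}+1,j)-\min(r_{j_2},j)\bigr],\]
and a three-case split on $j$ relative to $r_{j_2}$ and $r_{j_1}$ shows this expression equals $+1$ exactly when $r_{j_2}+1\le j\le r_{j_1}-1$ and $0$ otherwise. Hence $D_j=(p-1)|A_j|$ precisely on that range. The hypothesis $A\not\cong A'$ forces $r_{j_1}\ge r_{j_2}+2$, so the range is non-empty, every summand in $\sum_j D_j/p^j$ is strictly positive, and $\cyc(A')>\cyc(A)$.

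The identical recipe handles $A''$: the modification replaces $r_{j_1}$ by $r_{j_1}-1$ and adjoins a new part $1$ (requiring $r_{j_1}\ge2$), and the same $\min$-analysis yields $D''_j=(p-1)|A_j|$ on $1\le j\le r_{j_1}-1$ and $0$ elsewhere, so $\cyc(A'')-\cyc(A)=\sum_{j=1}^{r_{j_1}-1}(p-1)|A_j|/p^j>0$. The main bookkeeping hurdle is verifying the summation-by-parts step, which follows because $D_0=0$ and $D_j$ eventually vanishes, leaving no stray boundary contribution.
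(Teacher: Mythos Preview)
Your argument is correct. You reduce the comparison to the identity
$\cyc(A')-\cyc(A)=\sum_{j\ge1}D_j/p^j$ via Abel summation (valid because $D_0=0$ and $D_j$ eventually vanishes), then show $D_j=(p-1)|A_j|>0$ on a nonempty interval of values of $j$ and $D_j=0$ elsewhere; the same analysis handles $A''$. One small remark: the inequality $r_{j_1}\ge r_{j_2}+2$ is most cleanly justified by noting that the sequence $a'$, as written with entry $r_{j_1}-1$ preceding $r_{j_2}+1$, is assumed to be a defining partition and hence non-increasing; this forces $r_{j_1}-1\ge r_{j_2}+1$. Your phrasing via ``$A\not\cong A'$'' alone would not by itself exclude $r_{j_1}=r_{j_2}$, although in that case $a'$ fails to be non-increasing anyway, so the conclusion is unaffected.

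The paper's proof takes a very different and much shorter route: it asserts $\cyc(A)=\prod_{i=1}^{k}(1+r_i)$ and then compares the two affected factors. That product formula, however, is \emph{not} the number of cyclic subgroups of an abelian $p$-group: already for $A=\Z_p\times\Z_p$ one has $\cyc(A)=p+2$, not $(1+1)(1+1)=4$, as the paper's own earlier proposition counting cyclic subgroups of order $p^j$ confirms. So your layer-by-layer approach, while longer, is the one that actually establishes the lemma; it also makes transparent that the strict inequality holds for every prime $p$, since the sign of each $D_j$ depends only on the combinatorics of the two partitions.
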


\begin{proof}
It is clear that \[\cyc(A)= \prod _{i=1}^k (1+r_i), \hspace{3 mm} \text{ and } \hspace{3 mm} \cyc(A')= r_{j_1} (2+r_{j_2}) \prod _{i=1, i \neq j_1, j_2 }^k (1+r_i).\] As $a'$ is a valid partition of $n$, we must have $r_{j_1} >r_{j_2}+1$ and therefore we have $2r_{j_1}+r_{j_1}r_{j_2} > 1+ r_{j_1}+r_{j_2}+r_{j_1}r_{j_2}$. Hence,
 $\cyc(A') > \cyc(A)$.  
 
 As $a''$ is a valid partition of $n$, we must have $r_{j_1} \geq 2$ and therefore $\cyc(A'') > \cyc(A)$.
This completes the proof.
\end{proof}

We are now in a position to prove the next result which tells about the connection between the order sequence and the number of cyclic subgroups of $p$-groups. 

\begin{theorem}
\label{thm:order-seq-pgroup-number-cyclic-subgrp}
Let $A$ and $C$ be two abelian groups of order $p^n$, with defining partitions
$a$ and $c$ respectively. If $\os(A)$ dominates $\os(C)$, then $\cyc(A) \leq \cyc(C)$. 
\end{theorem}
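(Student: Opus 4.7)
The plan is to chain together Theorem~\ref{thm:relation-with-poset-partitionlattice} with the two lemmas just proved, so essentially all the work has already been done. First, I would invoke Theorem~\ref{thm:relation-with-poset-partitionlattice} to translate the hypothesis: since $\os(A)$ dominates $\os(C)$, the defining partitions satisfy $a \succeq c$ in the natural partial order on partitions of $n$.

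Next, I would apply Lemma~\ref{lem:majorization-removable-boxes} to produce a finite chain of partitions
\[a = a_0,\ a_1,\ \ldots,\ a_t = c\]
of $n$ in which each $a_{i+1}$ is obtained from $a_i$ by moving a single box from a higher row to a lower row, and every intermediate $a_i$ is a genuine partition. For each $i$, let $A_i$ denote the abelian group of order $p^n$ whose defining partition is $a_i$, so that $A_0 \cong A$ and $A_t \cong C$.

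The main (and only) point to verify is that every such elementary single-box move is covered by Lemma~\ref{lem:comparison-cyc-sub-onestep}. By inspection, moving one box from row $j_1$ to row $j_2 > j_1$ while keeping the result a partition forces either $j_2 \le k$ (so the move falls under the $a'$ case, moving the box to an existing lower row) or $j_2 = k+1$ (so the move falls under the $a''$ case, creating a new row of length $1$ at the bottom). In either case Lemma~\ref{lem:comparison-cyc-sub-onestep} gives $\cyc(A_i) < \cyc(A_{i+1})$. Iterating,
\[\cyc(A) = \cyc(A_0) \le \cyc(A_1) \le \cdots \le \cyc(A_t) = \cyc(C),\]
with every inequality strict when $t \ge 1$, which proves $\cyc(A) \le \cyc(C)$ (and, as a bonus, gives strict inequality unless $a = c$, i.e.\ unless $A \cong C$). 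There is no real obstacle beyond this inspection, since the combinatorial heavy lifting is already in Lemma~\ref{lem:comparison-cyc-sub-onestep}.
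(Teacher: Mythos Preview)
Your proposal is correct and follows essentially the same approach as the paper: translate domination into $a\succeq c$ via Theorem~\ref{thm:relation-with-poset-partitionlattice}, use Lemma~\ref{lem:majorization-removable-boxes} to produce a chain of single-box moves, and apply Lemma~\ref{lem:comparison-cyc-sub-onestep} at each step. Your explicit observation that each elementary move falls under one of the two cases ($a'$ or $a''$) of Lemma~\ref{lem:comparison-cyc-sub-onestep}, and that the inequality is strict unless $A\cong C$, are minor clarifications beyond what the paper writes, but the argument is the same.
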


\begin{proof}
Let \[A\cong\Z_{p^{r_1}}\times\cdots\times\Z_{p^{r_k}},\]
and 
\[C \cong\Z_{p^{s_1}}\times\cdots\times\Z_{p^{s_{\ell}}},\]
where $a=(r_1,\ldots,r_k)$ and $b=(s_1, s_2, \dots, s_{\ell})$ are defining partitions of $A$ and $C$ respectively. Then, the number of cyclic subgroups of $A$ and $C$ are respectively 
\[\cyc(A)=(r_1+1)(r_2+1)\dots(r_k+1)\]
and \[\cyc(C)=(s_1+1)(s_2+1)\dots(s_{\ell}+1).\]
By Theorem \ref{thm:relation-with-poset-partitionlattice}, we have $a\succeq c$. By Lemma \ref{lem:majorization-removable-boxes}, the Young diagram for $c$ can be obtained from that of $a$ by successively moving boxes from a higher row to a lower row (one box at a time), in such a way that each intermediate step is the Young diagram of a partition of $n$. Let us denote the partitions of the intermediate steps by $b^1, b^2, \dots, b^r$ and we also denote $a$ by $b^0$ and $c$ by $b^{r+1}$. Then we have 
\[a (=b^0) \succeq b^1 \succeq b^2 \succeq \dots \succeq \dots \succeq b^r \succeq c (=b^{r+1})\]
where the Yound diagram for each $b^i$ is obtained from that of $b^{i-1}$ by moving exactly one box from a higher row to a lower row. If $B^i$ denotes the abelian $p$-group of order $p^n$ with defining partition $b^i$, by using Lemma \ref{lem:comparison-cyc-sub-onestep}, we now have 
 \[\cyc(A) \leq  \cyc(B^1) \leq \cyc(B^2) \leq \dots \leq \cyc(B^r) \leq \cyc(C).\]
 This completes the proof. 
\end{proof}

It is clear that the converse of Theorem \ref{thm:order-seq-pgroup-number-cyclic-subgrp} need not hold, in general. For example, we can consider the groups $\Z_{16} \times \Z_2 \times \Z_2$ and $\Z_8 \times \Z_8$. Then, $\cyc(\Z_{16} \times \Z_2 \times \Z_2)=20$ and $\cyc(\Z_8 \times \Z_8)=16$ but the order sequences of the groups are not comparable. 

\section{Nilpotent and non-nilpotent groups} 
\label{sec:nilpotent-nonnilpotentgroups}
     
In this section, we are primarily interested in studying the minimality of order sequence among finite nilpotent groups. 
%For that purpose, we start with the following remark.  
      
      \iffalse        	
\begin{remark}\label{rk:f-ineq}
For any two groups $G$ and $H$ of order $n$, proving that $\os(G)$ dominates $\os(H)$ is equivalent to prove the following: for all divisors $d$ of $n$,
$$f_d(G) \geq f_d(H).$$
\end{remark}
\fi 
      
It is easy to see that the order sequence of any finite $p$-group of order $p^r$ dominates the order sequence of the elementary abelian $p$-group $E(p^r)$.

\begin{theorem}
\label{thm:nilp-order-seq-extreme-cases}
Let $G=P_1 \times P_2 \times \dots \times P_k$ be a finite nilpotent group. If the order sequence of $G$ is minimal among nilpotent groups, then for each $1 \leq i \leq k$, $P_i$ is a group of prime exponent.  
\end{theorem}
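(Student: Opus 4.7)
The plan is to prove the contrapositive: if some Sylow factor $P_i$ has exponent strictly greater than $p_i$, I will construct another nilpotent group $G'$ of the same order with $\os(G')$ strictly dominated by $\os(G)$, contradicting minimality. The natural candidate is $G' = P_1 \times \cdots \times E_i \times \cdots \times P_k$, where $E_i$ denotes the elementary abelian $p_i$-group of the same order as $P_i$. Since $E_i$ is abelian and the other factors are $p$-groups for distinct primes, $G'$ is nilpotent.

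First I will show that $\os(P_i)$ strongly dominates $\os(E_i)$ and that the two sequences are distinct. Choose any bijection $f_i \colon P_i \to E_i$ with $f_i(1)=1$. Every non-identity element of $P_i$ has order a positive power of $p_i$, hence divisible by $p_i$, which is exactly the order of every non-identity element of $E_i$; so $o(f_i(g)) \mid o(g)$ for all $g \in P_i$. Because $P_i$ has exponent greater than $p_i$, some element $g^\ast \in P_i$ has order at least $p_i^2$, giving the strict inequality $o(g^\ast) > o(f_i(g^\ast))$. In particular $\psi(P_i) > \psi(E_i)$, so the two order sequences cannot coincide.

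Next I will pass to the full product via Proposition~\ref{p:products}. Writing $H = \prod_{j \ne i} P_j$, we have $\gcd(|P_i|, |H|) = 1$, so either case of that proposition applies and yields that $\os(G) = \os(P_i \times H)$ dominates $\os(E_i \times H) = \os(G')$, realised by the product bijection $(p,h) \mapsto (f_i(p),h)$. The same bijection still witnesses strict inequality at each $(g^\ast,h)$, so $\psi(G) > \psi(G')$, which forces $\os(G) \ne \os(G')$. Combined with the domination, this shows $\os(G)$ strictly dominates $\os(G')$, contradicting the minimality of $\os(G)$ among nilpotent groups of order $|G|$.

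The only mildly subtle point is upgrading to \emph{strict} domination of the whole product: Proposition~\ref{p:products} as stated only asserts domination, not strictness, but the invariant $\psi$ provides a free certificate that the two order sequences remain distinct after taking a direct product with $H$. Beyond that the argument is routine bookkeeping, combining the bijection characterisation of Proposition~\ref{p:bijection} with the fact (noted in the paragraph preceding the theorem) that any $p$-group of order $p^r$ strongly dominates $E(p^r)$.
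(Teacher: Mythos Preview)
Your proof is correct and follows essentially the same approach as the paper: both arguments reduce to replacing a single Sylow factor by a group with smaller order sequence and then applying Proposition~\ref{p:products} in the coprime case. The paper packages this as an induction on the number of Sylow factors, while you argue the contrapositive directly; you are also more explicit than the paper about why the domination is strict after taking the direct product with $H$ (your use of $\psi$ as a certificate is clean---the paper's proof tacitly relies on Theorem~\ref{t:sequence-product}(b) to recover strictness, without saying so).
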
 

\begin{proof}
We prove by induction on the number of primes $k$. For $k=1$ this is of course true. For a nilpotent group $G$, it can be written as 
$G= P_1 \times P_2 \times \dots \times P_k$, where $P_i$ are the Sylow subgroups.  Let $G_1 = P_1 \times P_2 \times \dots \times P_{k-1} $ and $G_2 = P_k$. Then $\os(G_1)$ is minimal among nilpotent groups of its order. For if not, then we could replace it with a nilpotent group $G_1^*$ yielding a smaller sequence, whence $\os(G)$ would dominate $\os(G_1^*\times P_k)$ by Proposition~\ref{p:products}. The induction hypothesis now shows that $P_i$ is a group of prime exponent for $1\le i\le k-1$.

Similarly, the order sequence of $P_k$ is minimal, so $P_k$ is a group of prime exponent.   This completes the proof of the Theorem. 
\end{proof} 

We have already seen that if a group $G$ has the same order sequence as a cyclic group, then $G$ must be cyclic; but it may happen that $G$ is non-abelian but $G$ has the same order sequence as an abelian group. For example, one can take a non-abelian group of order $p^3$ with every element of order $p$. We next ask the same question for nilpotent groups. We show first that nilpotency is
determined by the order sequence of the group.

\begin{theorem}
If $G$ is a nilpotent group, and $H$ is a group of the same order such that
$\os(G)=\os(H)$, then $H$ is nilpotent.
\end{theorem}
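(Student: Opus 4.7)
The plan is to use a direct counting argument based on elements of prime-power order. Specifically, I would show that for any prime $p$ dividing $n=|G|$, the order sequence determines the number of $p$-power-order elements in the group, and that this number equals the $p$-part of $n$ precisely when the Sylow $p$-subgroup is normal; nilpotency of $G$ then forces the same condition to hold in $H$.

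First I would observe that if $G$ is nilpotent, then $G = P_1 \times \cdots \times P_k$ (direct product of its Sylow subgroups), and an element $(x_1,\ldots,x_k)$ has $p_i$-power order iff $x_j = 1$ for $j \neq i$. Hence the set of $p_i$-power-order elements of $G$ has cardinality exactly $|P_i|$, the $p_i$-part of $n$. Since $\os(H) = \os(G)$, the order sequence records for each divisor $d$ of $n$ the number of elements of order $d$, so in particular the number of elements of $p$-power order in $H$ equals the $p$-part of $n$ as well.

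Next I would show that in any finite group $H$, the number of $p$-power-order elements is at least $|P|$, where $P$ is a Sylow $p$-subgroup, with equality if and only if $P$ is normal in $H$. One direction is immediate: $P$ itself consists of $|P|$ elements of $p$-power order, so the count is at least $|P|$, and if $P \trianglelefteq H$ then every $p$-element lies in $P$ (every Sylow $p$-subgroup equals $P$), giving equality. For the converse, if the number of $p$-power-order elements equals $|P|$, then the $|P|$ elements of $P$ account for all such elements, so any Sylow $p$-subgroup $P'$ is contained in $P$ and hence equals $P$; so $P$ is the unique Sylow $p$-subgroup and is therefore normal.

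Combining these two steps, every Sylow subgroup of $H$ is normal, so $H$ is the direct product of its Sylow subgroups and is nilpotent. The main (minor) obstacle is just verifying the equality-case characterization of normality cleanly; everything else is direct from the definition of the order sequence and the structure of nilpotent groups.
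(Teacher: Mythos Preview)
Your proposal is correct and follows essentially the same approach as the paper: count the elements of $p$-power order, observe that nilpotency of $G$ forces this count to equal the $p$-part of $n$, transfer this via $\os(G)=\os(H)$, and deduce that each Sylow subgroup of $H$ is unique and hence normal. The paper's proof is slightly more terse (it does not separately state the ``at least $|P|$ with equality iff normal'' characterization), but the argument is the same.
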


\begin{proof} Let $|G|=|H|=p_1^{a_1}\cdots p_r^{a_r}$. All elements of
$p_i$-power order in $G$ belong to the unique Sylow $p_i$-subgroup of $G$,
and there are $p_i^{a_i}$ of them. By assumption, $H$ also has exactly 
$p_i^{a_i}$ elements of $p_i$-power order, and it has a Sylow $p_i$-subgroup
$P_i$; thus $P_i$ contains all elements of $p_i$-power order, and so is the
unique Sylow $p_i$-subgroup, and is normal in $H$. Since this holds for all $i$,
all the Sylow subgroups of $H$ are normal, and $H$ is nilpotent.
\end{proof}

The proof shows that, if $G$ is nilpotent and $\os(G)=\os(H)$, then for each prime
$p_i$ dividing $|G|$, the Sylow $p_i$-subgroups of $G$ and $H$ have the same
order sequence.

\medskip

Next we show that, if there is a non-nilpotent group of order $n$, then
nilpotent groups cannot realise order sequences which are minimal under
domination. 

First we require a lemma.

\begin{lemma}\label{l:order-non-nilp}
For a positive integer $n$, the following are equivalent:
\begin{itemize}
\item there exists a non-nilpotent group of order $n$;
\item $n$ is divisible by $p^dq$, where $p$ and $q$ are primes and $q\mid p^d-1$.
\end{itemize}
\end{lemma}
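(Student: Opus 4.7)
The plan is to prove the two implications separately, with the easier direction a direct construction and the harder direction relying on the structure theorem for minimal non-nilpotent (Schmidt) groups.

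For the direction that assumes divisibility by $p^d q$, suppose $p^d q \mid n$ with $q \mid p^d - 1$. Since the multiplicative group $\mathbb{F}_{p^d}^\times$ is cyclic of order $p^d - 1$, it contains an element $\alpha$ of order $q$. Multiplication by $\alpha$ gives an $\mathbb{F}_p$-linear automorphism of order $q$ on the additive group of $\mathbb{F}_{p^d}$, i.e., on $\Z_p^d$. The corresponding non-trivial semidirect product $\Z_p^d \rtimes \Z_q$ is non-nilpotent, because if its Sylow $q$-subgroup were normal, coprimality of orders would force the action of $\Z_q$ on $\Z_p^d$ to be trivial. Taking the direct product with $\Z_{n/(p^d q)}$ produces a non-nilpotent group of order exactly $n$.

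For the converse, given a non-nilpotent group of order $n$, the plan is to pass to a subgroup $H$ of minimum order that is still non-nilpotent. Then every proper subgroup of $H$ is nilpotent, so $H$ is a Schmidt group. By the classical Schmidt structure theorem, $|H| = p^a q^b$ for distinct primes $p,q$, and $H = P \rtimes Q$ with $P$ the normal Sylow $p$-subgroup and $Q$ a cyclic Sylow $q$-subgroup. Since $H$ is non-nilpotent, $Q$ acts non-trivially on $P$; by the standard coprime-action lemma applied to $\gcd(|Q|,|P|)=1$, the induced action on $V = P/\Phi(P)$, an $\mathbb{F}_p$-vector space of some dimension $k \le a$, is also non-trivial. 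Hence some element of $Q$ induces a non-identity transformation $T \in \mathrm{GL}_k(\mathbb{F}_p)$ of order exactly $q$ (take $T = S^{q^{c-1}}$, where $S$ is a generator of the cyclic image of $Q$, of order $q^c$ with $c \ge 1$). The minimal polynomial of $T$ then divides $X^q - 1 = (X-1)\Phi_q(X)$ and has an irreducible factor of the cyclotomic polynomial $\Phi_q(X)$, since $T \ne I$. Over $\mathbb{F}_p$ every irreducible factor of $\Phi_q(X)$ has the same degree $d = \operatorname{ord}_q(p)$, the multiplicative order of $p$ modulo $q$. In particular $q \mid p^d - 1$, and $d \le k \le a$, so that $p^d q$ divides $|H|$ and hence $n$.

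The main obstacle is the invocation of Schmidt's theorem on minimal non-nilpotent groups: the statement is classical but not trivial, and for a self-contained treatment one would either cite it or include a short structural proof showing that $|H|$ has only two prime divisors and the Sylow $q$-subgroup is cyclic. The remaining ingredients---Sylow theory, coprime action on $P/\Phi(P)$, and the factorization of cyclotomic polynomials over $\mathbb{F}_p$---are routine.
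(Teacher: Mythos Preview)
Your proof is correct. Both directions agree with the paper's approach at a high level: the construction for the easy direction is identical (the paper writes it as the affine group $\{x\mapsto ax+b\}$ over $\mathbb{F}_{p^d}$ with $a$ a $q$th root of unity, which is precisely your semidirect product $\Z_p^d\rtimes\Z_q$), and both proofs reduce the harder direction to a minimal non-nilpotent (Schmidt) subgroup.

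The genuine difference is in how the Schmidt subgroup is analysed. The paper invokes the fine classification of Schmidt groups from Ballester-Bolinches, Esteban-Romero and Robinson, which lists three explicit types (II, IV, V), and then checks the divisibility condition $q\mid p^d-1$ with $p^dq\mid |H|$ separately in each case. You instead use only the coarse structural statement (that $|H|=p^aq^b$ and $H=P\rtimes Q$ with $P$ normal and $Q$ cyclic) and then argue uniformly: pass to the action on the Frattini quotient $V=P/\Phi(P)$ via the coprime-action lemma, extract an element of order $q$ in $\mathrm{GL}_k(\mathbb{F}_p)$, and read off $d=\mathrm{ord}_q(p)\le k\le a$ from the degree of an irreducible factor of $\Phi_q(X)$. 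This buys you a cleaner, case-free argument that relies on less from the literature, at the cost of needing the standard fact that a $p'$-automorphism acting trivially on $P/\Phi(P)$ acts trivially on $P$. Either approach is perfectly acceptable.
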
 

\begin{proof}
Suppose first that $G$ is a non-nilpotent group of order $n$. Then $G$ contains a minimal non-nilpotent subgroup (one all of whose proper subgroups are nilpotent). So it suffices to deal with the case where $G$ is minimal non-nilpotent.

The minimal non-nilpotent groups were determined by Schmidt~\cite{schmidt}. A convenient reference is \cite{ber}, which we use here. Page 3456 of this paper gives a list of five types of group, and Theorem~3 asserts that the Schmidt groups 
(the minimal non-nilpotent groups) are exactly those of types II, IV and V.
\begin{itemize}
\item Type II groups are semidirect products $[P]Q$, where $Q$ is cyclic of order $q^r$, so that the $p$-group $P$ is a faithful irreducible $Q/Q^q$-module with trivial centralizer. Thus the cyclic group $Q/Q^q$ of order $q$ acts fixed-point-freely on $P\setminus\{1\}$, so $q\mid|P|-1$.
\item Type IV groups are semidirect products $[P]Q$, where $P$ is special of rank $2m$ and the cyclic group $Q$ has a faithful irreducible action on $P/\Phi(P)$. Thus $q$ divides $|P/\Phi(P)|=p^{2m}$.
\item Type V groups are semidirect products $[P]Q$, where $|P|=p$, $Q$ is cyclic of order $q^r$, and $Q$ induces an automorphism group of $P$ of order $q$; so $q\mid p-1$.
\end{itemize}

Conversely, suppose that $p$ and $q$ are primes such that $q\mid p^d-1$. The group $B=\{x\mapsto ax+b\}$ of permutations of the finite field of order $p^d$, where $a$ runs through the $q$th roots of unity in the field and $b$ runs through the whole field, is a non-nilpotent group of order $p^dq$. If $p^dq\mid n$, then let $K$ be any abelian group of order $n/(p^dq)$; then $B\times K$ is a non-nilpotent group of order $n$.
\end{proof}

\begin{theorem}
	\label{thm:non-nil-min-os-existence}
Let $n$ be a positive integer for which there exists a non-nilpotent group of
order $n$. Then there is a non-nilpotent group $H$ of order $n$ such that
the order sequence of any nilpotent group $G$ of order $n$ properly dominates
$\os(H)$.
\end{theorem}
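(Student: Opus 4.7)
The plan is to produce an explicit non-nilpotent witness $H$ of order $n$ whose order sequence sits strictly below that of $N=\prod_{\ell\mid n}E(\ell^{a_\ell})$, the product of elementary abelian Sylow subgroups; and to observe separately that $N$ itself lies below every nilpotent group of order $n$.

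First I would invoke Lemma~\ref{l:order-non-nilp} to fix distinct primes $p,q$ and an integer $e\ge 1$ with $p^e q\mid n$ and $q\mid p^e-1$. Writing $n=p^{a_p}q^{a_q}\prod_{\ell\ne p,q}\ell^{a_\ell}$, let $B=E(p^e)\rtimes\Z_q$ be the Schmidt-type group of order $p^e q$ produced in the proof of that Lemma, and set
\[
H=B\times E(p^{a_p-e})\times E(q^{a_q-1})\times\prod_{\ell\ne p,q}E(\ell^{a_\ell}),
\]
with the convention $E(\ell^0)=1$. Then $H$ is non-nilpotent of order~$n$.

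Two intermediate comparisons now suffice. \emph{First}, $\os(G)$ strongly dominates $\os(N)$ for every nilpotent $G$ of order $n$: decomposing $G$ as the product of its Sylow subgroups and using, on each Sylow $\ell$-subgroup, any bijection onto $E(\ell^{a_\ell})$ fixing the identity (which sends an element of order $\ell^k$ to one of order $\ell$, and $\ell\mid\ell^k$) gives a strong-domination bijection coordinate-wise, which Proposition~\ref{p:products} iterated through the Sylow decomposition assembles into strong domination $\os(G)\succeq\os(N)$. \emph{Second}, up to a reordering of factors,
\[
N\;\cong\;(E(p^e)\times\Z_q)\times R\quad\text{and}\quad H\;\cong\;B\times R,
\]
where $R=E(p^{a_p-e})\times E(q^{a_q-1})\times\prod_{\ell\ne p,q}E(\ell^{a_\ell})$ is the common factor. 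Since $B$ is an extension of the abelian group $E(p^e)$ by $\Z_q$ and $|E(p^e)|,|\Z_q|$ are coprime, Theorem~\ref{t:extension} gives that $\os(E(p^e)\times\Z_q)$ strongly dominates $\os(B)$; Proposition~\ref{p:products}(b) then promotes this to strong domination of $\os(N)$ over $\os(H)$.

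For \emph{proper} domination it is enough that $\os(E(p^e)\times\Z_q)\ne\os(B)$: in $E(p^e)\times\Z_q$ any pair $(x,y)$ with both coordinates non-identity has order $pq$, giving $(p^e-1)(q-1)>0$ elements of order $pq$; but $B$ has none, because $\Z_q$ acts fixed-point-freely on $E(p^e)$, the orbit sum of $x$ under $\Z_q$ vanishes, and hence $(xy)^q=1$ for every $x\in E(p^e)$ and every $y\in\Z_q$, forcing every element of $B$ to have order in $\{1,p,q\}$. Under the bijection $(g,r)\mapsto(f(g),r)$ used in Proposition~\ref{p:products}, the strict order-decrease at some $g^*\in E(p^e)\times\Z_q$ propagates to $(g^*,1)\in N$, so $\os(N)$ properly dominates $\os(H)$. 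Chaining gives $\os(G)\succeq\os(N)\succ\os(H)$ for every nilpotent $G$ of order $n$, which is the claim.

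The main conceptual obstacle is recognising that $B$ is precisely the place where Theorem~\ref{t:extension} is strict: the $p$-order carried by the normal factor is absorbed into the $q$-order of $xy$, so $B$ contains no element of order $pq$. Once this structural observation is in hand, the rest is an assembly of the product and extension machinery from Section~\ref{sec:products-extensions}.
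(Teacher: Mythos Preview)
Your proof is correct and follows essentially the same architecture as the paper's: reduce to the elementary-abelian product $N$, build the non-nilpotent witness $H$ from the Schmidt-type block $B$ of Lemma~\ref{l:order-non-nilp} times a complementary elementary-abelian factor, and then compare via Proposition~\ref{p:products}(b). The two mild differences are that (i) the paper establishes $\os(E(p^e)\times\Z_q)\succeq\os(B)$ by an explicit bijection rather than by invoking Theorem~\ref{t:extension}, and (ii) you handle the word ``properly'' by exhibiting elements of order $pq$ in $A$ but not in $B$, whereas the paper leaves properness implicit (it follows from the preceding theorem that nilpotency is an invariant of the order sequence). One small slip: your claim ``$(xy)^q=1$ for every $x\in E(p^e)$ and every $y\in\Z_q$'' fails when $y=1$ and $x\ne1$; the orbit-sum argument needs $y$ to generate $\Z_q$. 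This does not affect the conclusion that $B$ has no element of order $pq$.
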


\begin{proof}
By Theorem~\ref{thm:nilp-order-seq-extreme-cases}, the abelian group $G$ of
order $n$ whose Sylow subgroups have prime exponent is dominated by every
nilpotent group of order $n$.

By Lemma~\ref{l:order-non-nilp}, there is a non-nilpotent group $B$ of order
$p^dq$ dividing $n$. Let $A$ be the direct product of an elementary abelian
group of order $p^d$ and a cyclic group of order $q$. Now $B$ has the same
number of elements of order $1$ or $p$ as $A$ does; the remaining elements of
$B$ all have order $q$, while $A$ has elements of orders $q$ and $pq$. Let $f$
be a bijection from $A$ to $B$ mapping the identity to the identity,
elements of order $p$ to elements of order $p$, and the remaining elements
arbitrarily. This bijection shows that $\os(A)$ strongly dominates $\os(B)$.

Let $K$ be abelian group of order $n/(p^dq)$, all of whose Sylow subgroups have
prime exponent. Then $G\cong A\times K$. By Proposition~\ref{p:products}(b),
$H=B\times K$ is a non-nilpotent group of order $n$ dominated by $G$, and
hence by every nilpotent group of order $n$. This completes the proof.
\end{proof}

        \section{Bounds on several functions}
        \label{sec:max-order-seq}
        
In this section we use the maximality
        of $\Z_n$ under (strong) domination to establish bounds on the gaps between the values
        of $\psi$ and of $\rho$ on cyclic and non-cyclic groups, and characterize
        groups meeting these bounds (Theorem~\ref{thm:upper-bdd-prod-eltorder-G}).
        
        \begin{corollary}
        	Let $F$ be a symmetric function of $n$ arguments which is a strictly
        	increasing function of each argument and $G$ be a non-cyclic group of order~$n$.
        	Then $F(\os(G))<F(\os(\Z_n))$.
        \end{corollary}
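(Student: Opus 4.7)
The plan is to combine Amiri's strong domination theorem (Theorem~\ref{t:amiristrong}) with the classical fact that the cyclic group is strictly distinguished from every other group of the same order by the sum-of-orders function (Theorem~\ref{thm:amiri-cia}), and then exploit the symmetry and strict monotonicity of $F$.

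First, I would apply Theorem~\ref{t:amiristrong} to obtain a bijection from $\Z_n$ to $G$ witnessing strong domination. In particular, writing $\os(\Z_n)=(a_1,\ldots,a_n)$ and $\os(G)=(b_1,\ldots,b_n)$ in non-decreasing order, this gives $b_i\le a_i$ for every $i$ (as noted in the opening of Section~\ref{sec:products-extensions}, strong domination implies ordinary domination, i.e., termwise inequality of the sorted sequences).

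Second, I would show that the two sequences cannot coincide when $G$ is non-cyclic. Indeed, if $\os(G)=\os(\Z_n)$ then $\psi(G)=\psi(\Z_n)$, which by Theorem~\ref{thm:amiri-cia} forces $G\cong\Z_n$, contradicting non-cyclicity. Hence there is at least one index $i_0$ with $b_{i_0}<a_{i_0}$.

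Third, using the symmetry of $F$, I would interpolate between the two sequences by changing one coordinate at a time: define sequences $c^{(0)}=(b_1,\ldots,b_n)$ and successively $c^{(k)}$ obtained from $c^{(k-1)}$ by replacing the $k$th entry $b_k$ with $a_k$. Since $F$ is strictly increasing in each argument and $b_k\le a_k$, we have $F(c^{(k-1)})\le F(c^{(k)})$ for every $k$, with strict inequality at $k=i_0$ because $b_{i_0}<a_{i_0}$. Telescoping yields $F(\os(G))=F(c^{(0)})<F(c^{(n)})=F(\os(\Z_n))$, as required. There is no real obstacle here; the only subtlety is the appeal to Theorem~\ref{thm:amiri-cia} (or equivalently Theorem~\ref{thm:prod-thm-max}) to rule out equality of the sorted sequences when $G\not\cong\Z_n$.
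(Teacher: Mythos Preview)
Your argument is correct and follows the same skeleton the paper has in mind: termwise domination from Theorem~\ref{t:amiristrong}, plus strict inequality in at least one coordinate, plus monotonicity of $F$.

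One point deserves adjustment. You invoke Theorem~\ref{thm:amiri-cia} to rule out $\os(G)=\os(\Z_n)$, but in the paper's logical flow this Corollary is what \emph{yields} Theorem~\ref{thm:amiri-cia} (the paper states immediately afterward that Theorem~\ref{thm:amiri-cia} is a consequence). So within this presentation your appeal is circular. Fortunately the step is much more elementary than you suggest: the sequence $\os(\Z_n)$ contains the entry $n$ (indeed $\phi(n)$ copies of it), whereas a non-cyclic group of order $n$ has no element of order $n$, so its order sequence cannot contain $n$. Hence $\os(G)\ne\os(\Z_n)$ directly, without any recourse to Theorem~\ref{thm:amiri-cia} or Theorem~\ref{thm:prod-thm-max}. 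Replacing your second step with this one-line observation removes the circularity and matches the paper's intended (implicit) argument.
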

        
        Theorem \ref{thm:amiri-cia} is an immediate consequence. 
        We can also establish a gap between the cyclic group and other
        groups of order $n$.
        
         Domenico, Monetta and Noce \cite{Domenico-Monetta-Noce} proved the following upper bound for the function $\rho(G)$ where $G$ has a Sylow tower. We recall that $G$ is said to admit a Sylow tower if there exists a normal series
        $$\{e\}=G_0 \leq G_1 \leq G_2 \leq  \dots \leq G_n=G$$
        such that $G_{i+1}/G_i$ is isomorphic to a Sylow subgroup of $G$ 
        for every $0 \leq i \leq n-1.$ 
        
        \begin{theorem} \label{thm:dom-mos-nose-prod-elt-order}
        	Let $G$ be a non-cyclic group of order $n$ admitting a Sylow tower. Then 
        	$$\rho(G) \leq q^{-q} \rho(\mathbb{Z}_n),$$
        	where $q$ is the smallest prime dividing $n$. The same inequality holds if $n=p^aq^b$ with $p>q$ or $G$ is a Frobenius group.  
        \end{theorem}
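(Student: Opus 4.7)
The approach is induction on the length of the Sylow tower, reducing to a $p$-group base case that is handled directly via Amiri's strong domination theorem. The key tools throughout are Theorem~\ref{t:amiristrong}, Theorem~\ref{t:extension}, and Corollary~\ref{c:seq-product}.

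For the base case, suppose $G=P$ is a non-cyclic $p$-group of order $p^n$. Then $n\ge 2$ and the exponent of $P$ is at most $p^{n-1}$. By Theorem~\ref{t:amiristrong} there is a bijection $f:\Z_{p^n}\to P$ with $o(f(x))\mid o(x)$ for every $x$. For each of the $\phi(p^n)=p^{n-1}(p-1)$ generators $x$ of $\Z_{p^n}$ one has $o(f(x))\le p^{n-1}$, so the pointwise ratio $o(f(x))/o(x)$ is at most $1/p$; for all other $x$ the ratio is at most $1$. Multiplying and using $p^{n-1}(p-1)\ge p$ for $n\ge 2$ gives $\rho(P)\le p^{-\phi(p^n)}\rho(\Z_{p^n})\le p^{-p}\rho(\Z_{p^n})$, which is the desired bound in the $p$-group case.

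For the inductive step, choose a normal Sylow $p$-subgroup $P$ furnished by the Sylow tower, so $G=P\rtimes H$ where $\gcd(|P|,|H|)=1$ and $H$ inherits a Sylow tower. If $P$ is non-cyclic and abelian, then Theorem~\ref{t:extension} yields $\rho(G)\le \rho(P\times H)$, and Corollary~\ref{c:seq-product} rewrites this as $\rho(P)^{|H|}\rho(H)^{|P|}$. Applying the base case to $P$ and Theorem~\ref{thm:prod-thm-max} to $H$ gives
$$\rho(G)\le p^{-p|H|}\,\rho(\Z_{|P|})^{|H|}\rho(\Z_{|H|})^{|P|}=p^{-p|H|}\,\rho(\Z_n),$$
and since $p\ge q$ and $|H|\ge 1$, the factor $p^{-p|H|}$ is at most $q^{-q}$. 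If instead $P$ is cyclic, one inducts on $H$, which must be non-cyclic for $G$ to be so. The additional hypotheses in the theorem are absorbed by the same machinery: a Frobenius group has a normal nilpotent kernel and hence a Sylow tower, reducing to the above; a group of order $p^aq^b$ with $p>q$ is solvable by Burnside, and one extracts a normal Hall (hence Sylow) subgroup to feed into the same extension argument.

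The main obstacle is the case of a non-cyclic $Z$-group (every Sylow subgroup cyclic while $G$ itself is not): here no Sylow subgroup contributes a $p^{-p}$ gap through the base case, so the required factor of $q^{-q}$ must be squeezed out of the nontrivial semidirect action in the metacyclic presentation $G\cong \Z_m\rtimes\Z_r$ with $\gcd(m,r)=1$, by explicitly comparing the resulting order sequence with that of $\Z_n$. A secondary technical obstacle is that Theorem~\ref{t:extension} demands an abelian normal factor; for a non-abelian (hence non-cyclic) Sylow subgroup $P$ one cannot invoke it directly and must instead apply Theorem~\ref{t:amiristrong} to $G$ as a whole, carefully tracking $p$-parts and $p'$-parts of element orders to recover the same bound.
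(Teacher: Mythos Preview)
The paper does not prove this theorem at all: it is quoted as a result of Domenico, Monetta and Noce \cite{Domenico-Monetta-Noce}, and the paper's contribution is to supersede it with the stronger Theorem~\ref{thm:upper-bdd-prod-eltorder-G}. So there is no ``paper's own proof'' to compare against; the question is simply whether your argument stands on its own.

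It does not. You explicitly flag two cases as unresolved --- the $Z$-group case (all Sylow subgroups cyclic, $G$ not) and the case of a non-abelian normal Sylow subgroup --- and in both you only say what one ``must'' do without doing it. These are not minor loose ends: in the $Z$-group case none of your inductive machinery yields any gap at all between $\rho(G)$ and $\rho(\Z_n)$, so the required factor $q^{-q}$ has to come from a genuinely new computation you have not supplied. In the non-abelian Sylow case, Theorem~\ref{t:extension} is unavailable, and ``tracking $p$-parts and $p'$-parts'' via Theorem~\ref{t:amiristrong} applied to $G$ directly does not obviously recover a multiplicative splitting $\rho(G)\le\rho(P)^{|H|}\rho(H)^{|P|}$; strong domination of $G$ by $\Z_n$ only gives the weaker global bound of Theorem~\ref{thm:upper-bdd-prod-eltorder-G}, not anything factored through $P$ and $H$.

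Your reductions for the two supplementary hypotheses are also wrong. A Frobenius group need not have a Sylow tower: the kernel is nilpotent by Thompson, but the complement can be $\mathrm{SL}_2(5)$, which is not even solvable, so the tower cannot be continued above the kernel. Likewise, a group of order $p^aq^b$ is solvable by Burnside but need not admit a Sylow tower; $S_4$ (order $2^3\cdot 3$) has no normal Sylow subgroup, so ``extracting a normal Hall (hence Sylow) subgroup'' fails at the first step. These two cases therefore require separate arguments, as the statement of the theorem itself suggests.
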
   
        
        The cited paper also includes similar results for other classes such as
        super-solvable groups.
        
        In this paper, we improve their result
        in three ways. First, we do not need to assume the existence of a Sylow tower,
        or any restriction on the group $G$.
        Second, our bound is stronger. Third, we can deal with other functions; we give
        a result for the sum of the orders of group elements as an example, and
        determine the groups meeting our bound for either sum or product.
        
        \begin{theorem} \label{thm:upper-bdd-prod-eltorder-G}
         Let $G$ be a non-cyclic  group of order $n.$  Then
        	\begin{enumerate}
        		\item
        		$\rho(G) \leq q^{-\phi(n)} \rho (\mathbb{Z}_n)$;
        		\item  
        		$\psi(G) \leq \psi(\Z_n)-n\phi(n)(q-1)/q$;
        	\end{enumerate}
        	where $q$ is the smallest prime divisor of $n$ and $\phi$ is Euler's functon.
        	Equality holds if and only if either $G = \mathbb{Z}_q \times \Z_q$ or $G$ is
        	the quaternion group of order~$8$.
        \end{theorem}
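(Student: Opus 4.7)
The plan is to read off both inequalities from Amiri's strong domination theorem (Theorem~\ref{t:amiristrong}), which supplies a bijection $f\colon\Z_n\to G$ with $o(f(g))\mid o(g)$ for every $g\in\Z_n$. Since $G$ is non-cyclic, it contains no element of order $n$, so for each of the $\phi(n)$ generators $g$ of $\Z_n$ the image $f(g)$ has order a proper divisor of $n$; in particular $o(f(g))\le n/q$ and the positive integer $o(g)/o(f(g))$ is at least $q$.

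For part~(a), the ratio of the two products is
\[
\frac{\rho(\Z_n)}{\rho(G)}=\prod_{g\in\Z_n}\frac{o(g)}{o(f(g))}\ge q^{\phi(n)},
\]
since every factor is at least $1$ and the $\phi(n)$ generator factors are at least $q$. For part~(b), the difference of the two sums is
\[
\psi(\Z_n)-\psi(G)=\sum_{g\in\Z_n}\bigl(o(g)-o(f(g))\bigr)\ge \phi(n)\bigl(n-n/q\bigr)=\frac{n\phi(n)(q-1)}{q},
\]
each generator contributing at least $n-n/q$ and each non-generator at least $0$.

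Equality in either inequality is equivalent to every term meeting its lower bound: $o(f(g))=o(g)$ on non-generators and $o(f(g))=n/q$ on generators. This pins down the order sequence of $G$ to have $\phi(d)$ elements of each order $d\mid n$ with $d<n$, $d\ne n/q$, and $\phi(n)+\phi(n/q)$ elements of order $n/q$; in particular the exponent of $G$ is exactly $n/q$. A direct computation verifies that $\Z_q\times\Z_q$ (with $n=q^2$) and $Q_8$ (with $n=8$) realise this sequence.

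The main obstacle is the converse. Since the exponent of $G$ is $n/q$, no element has order a proper divisor $d$ of $n$ with $d\nmid n/q$; but the order sequence requires $\phi(d)\ge 1$ such elements whenever such a $d$ exists. Writing $n=q^a m$ with $\gcd(q,m)=1$, the divisors of $n$ not dividing $n/q$ are the $q^a d'$ for $d'\mid m$, and none of these is a proper divisor of $n$ unless $m>1$; so $m=1$ and $n=q^a$, with $a\ge 2$ since $G$ is non-cyclic. For $a=2$ the only non-cyclic group of order $q^2$ is $\Z_q\times\Z_q$. For $a\ge 3$ the sequence requires exactly $\phi(q)=q-1$ elements of order $q$, so $G$ has a unique subgroup of order $q$; by a classical theorem this forces $G$ cyclic for odd $q$ (contradicting non-cyclicity) and generalised quaternion $G=Q_{2^a}$ for $q=2$. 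A direct check of $Q_{2^a}$ shows that the elements outside its cyclic maximal subgroup all have order $4$, which agrees with the required exponent $2^{a-1}$ only when $a=3$, giving $G=Q_8$.
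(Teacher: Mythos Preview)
Your derivation of the two inequalities from Amiri's bijection is correct and matches the paper's argument. Your identification of the forced order sequence in the equality case is also correct, as is the final analysis once $n$ is known to be a prime power.

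The gap is in your reduction to the prime-power case. You assert that ``the exponent of $G$ is exactly $n/q$'', and then infer that every element order divides $n/q$. But the order sequence you just wrote down contains $\phi(d)\ge 1$ elements of order $d$ for \emph{every} proper divisor $d$ of $n$. If $n=q^a m$ with $m>1$, this includes elements of order $q^a$, and $q^a\nmid n/q=q^{a-1}m$. So the exponent (the lcm of all element orders) is actually $n$, not $n/q$; and if instead you meant ``maximum element order'' by ``exponent'', then the implication ``all orders divide $n/q$'' fails (compare $S_3$, with maximum order $3$ but an element of order $2$). Either way the contradiction you want does not materialise, and nothing so far rules out $m>1$.

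The paper closes this gap with genuine group theory rather than arithmetic of the sequence alone: since there are exactly $\phi(m)$ elements of each order $m\notin\{n,n/q\}$, $G$ has a unique cyclic subgroup of each such order, which is therefore normal. With at least three prime divisors $q,r,s$, the normal cyclic subgroups of orders $n/r$ and $n/s$ generate a cyclic subgroup of order $n$, a contradiction. With exactly two primes, the (unique, cyclic, normal) Sylow $q$-subgroup together with a short Sylow argument again forces $G$ cyclic. Only the prime-power case remains, where your treatment via the unique subgroup of order $q$ and the cyclic/generalised-quaternion dichotomy is fine.
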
  
        
        \begin{proof}
        	We compare the order sequences of $G$ and $\mathbb{Z}_n$. Let
        	$$\os(G)=(o(g_1), o(g_2), \dots, o(g_n)) \hbox{ and }
        	\os(\Z_n)=(o(a_1), o(a_2), \dots, o(a_n)).$$
By Theorem~\ref{t:amiristrong}, we have $o(g_i) \leq o(a_i)$ for all $1 \leq i \leq n.$ Moreover, $\Z_n$ has $\phi(n)$ elements of order $n$ and the last $\phi(n)$ terms of $\os(G)$ are clearly $\leq n/q$. So replacing the $\phi(n)$
        	elements of $\os(\Z_n)$ equal to $n$ by $n/q$ gives a sequence $a$
        	which still dominates $\os(G)$. (This is still a non-decreasing sequence since
        	no element of $G$ has order larger than $n/q$.) Therefore
        	$$\rho(G) \leq q^{-\phi(n)} \rho (\mathbb{Z}_n)
        	\hbox{ and }
        	\psi(G)\leq\psi(\Z_n)-\phi(n)(n-n/q)$$
        	(the values on the right are the sum and product of elements of $a$).
        	
        	We leave to the reader the analogous result for $\psi_k$.
        	
        	It is easy to see that when $G=\Z_q\times\Z_q$ or $G=Q_8$, the equality holds. 
        	
        	Conversely, suppose that $G$ is any group attaining either of the bounds. The
        	proof of the inequality shows that $\os(G)$ is obtained from $\os(\Z_n)$ by
        	replacing the $\phi(n)$ orders $n$ by orders $n/q$, leaving the others as
        	before. This implies that, if $m$ is any divisor of $n$ other than $n$ and
        	$n/q$, then $G$ contains the same number of elements of order $m$ as $\Z_n$,
        	namely $\phi(m)$ of them; so $G$ has a unique cyclic subgroup of order $m$,
        	which is normal in $G$ and contains all elements of order $m$.
        	
        	We separate into three cases. Suppose first that $n$ has at least three prime
        	divisors, say $q$ (the smallest), $r$ and $s$. Then $G$ has unique cyclic
        	subgroups of orders $n/r$ and $n/s$; their product is a cyclic subgroup of
        	order~$n$, neccessarily equal to~$G$, a contradiction.
        	
        	Next suppose that $G$ is a $q$-group. Since it is not cyclic, either
        	$G=\Z_q\times\Z_q$, or $|G|>q^3$. In the second case, $G$ contains a unique
        	subgroup of order $q$. A theorem  of Burnside (see~\cite[Theorem 12.5.2]{hall})
        	shows that $G$ is cyclic or
        	generalized quaternion. Cyclic groups are excluded, and it is easy to see that
        	the only generalized quaternion group satisfying the conditions is $Q_8$.
        	
        	Finally suppose that only two primes $q$ and $r$ divide $|G|$. By our earlier
        	remark, the Sylow $q$-subgroup of $G$ is cyclic and normal. Let $R$ be the
        	Sylow $r$-subgroup. If $|Q|>q$, then also $R$ is cyclic and normal, whence $G$
        	is cyclic, a contradiction. So $|Q|=q$. Let $R$ be the Sylow $r$-subgroup.
        	The number of conjugates of $R$ is $1$ or $q$, and is congruent to $1$ mod~$r$;
        	so $R$ is normal in $G$, and $G=Q\times R$. Now $R$ must be non-cyclic; since
        	$r$ is odd, Burnside's theorem implies that $G$ contains more than $r-1$
        	elements of order~$r$, a contradiction.
        \end{proof}

        It is easy to show that, for any $n \neq q $, we have $\phi(n)\geq q$, and therefore the bound in Theorem \ref{thm:upper-bdd-prod-eltorder-G} is stronger than the bound in Theorem \ref{thm:dom-mos-nose-prod-elt-order}.

        Domenico, Monetta and Noce \cite[Proposition 8]{Domenico-Monetta-Noce} also proved the following upper bound for $\rho(G)$ where $G$ is nilpotent. 
        
        \begin{theorem} \label{thm:dom-mos-nose-prod-elt-order-nilpotent}
        	Let $G$ be a non-cyclic nilpotent group of order $n$. Then 
        	$$\rho(G) \leq q^{-\frac{n(q-1)}{q}} \rho(\mathbb{Z}_n),$$
        	where $q$ is the smallest prime dividing $n$.
        \end{theorem}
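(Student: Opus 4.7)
The plan is to reduce the statement to the Sylow-by-Sylow case and then apply Theorem~\ref{thm:upper-bdd-prod-eltorder-G} inside the non-cyclic Sylow factors. Since $G$ is nilpotent, write $G=P_1\times\cdots\times P_k$ with $P_i$ the Sylow $p_i$-subgroup of order $n_i=p_i^{a_i}$, and analogously $\mathbb{Z}_n\cong\mathbb{Z}_{n_1}\times\cdots\times\mathbb{Z}_{n_k}$. Iterating Corollary~\ref{c:seq-product} gives
\[\rho(G)=\prod_{i=1}^k \rho(P_i)^{n/n_i},\qquad \rho(\mathbb{Z}_n)=\prod_{i=1}^k \rho(\mathbb{Z}_{n_i})^{n/n_i},\]
so the ratio factors as
\[\frac{\rho(\mathbb{Z}_n)}{\rho(G)}=\prod_{i=1}^k\left(\frac{\rho(\mathbb{Z}_{n_i})}{\rho(P_i)}\right)^{n/n_i}.\]

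For each cyclic $P_i$ the corresponding factor equals $1$. Since $G$ is non-cyclic, at least one Sylow subgroup, say $P_{i_0}$, is non-cyclic. Inside the $p_{i_0}$-group $P_{i_0}$ the smallest prime divisor of $|P_{i_0}|$ is $p_{i_0}$ itself, so Theorem~\ref{thm:upper-bdd-prod-eltorder-G}(a) yields
\[\frac{\rho(\mathbb{Z}_{n_{i_0}})}{\rho(P_{i_0})}\geq p_{i_0}^{\phi(n_{i_0})}=p_{i_0}^{p_{i_0}^{a_{i_0}-1}(p_{i_0}-1)}.\]
Raising to the power $n/n_{i_0}=n/p_{i_0}^{a_{i_0}}$ the exponent of $p_{i_0}$ collapses to $n(p_{i_0}-1)/p_{i_0}$. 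The remaining factors are each at least $1$ (trivially for cyclic $P_i$, and by the same argument for any other non-cyclic Sylow), so
\[\frac{\rho(\mathbb{Z}_n)}{\rho(G)}\geq p_{i_0}^{\,n(p_{i_0}-1)/p_{i_0}}.\]

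To close the gap between $p_{i_0}$ and the smallest prime $q$ dividing $n$, I would verify that $f(x)=(x-1)\ln x/x$ is strictly increasing on $x\geq 2$: one checks $f'(x)=(x-1+\ln x)/x^2>0$ for $x>1$. Since $p_{i_0}\geq q$, we obtain $p_{i_0}^{\,n(p_{i_0}-1)/p_{i_0}}=\exp(nf(p_{i_0}))\geq\exp(nf(q))=q^{\,n(q-1)/q}$, and rearranging gives the claimed bound $\rho(G)\leq q^{-n(q-1)/q}\rho(\mathbb{Z}_n)$. The main subtle point is this cross-prime comparison via the monotonicity of $f$; the rest is a clean reduction using the multiplicativity of $\rho$ over coprime direct factors together with the $p$-group case already proved.
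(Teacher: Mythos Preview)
Your argument is correct. Note, however, that the paper does not supply its own proof of this statement: Theorem~\ref{thm:dom-mos-nose-prod-elt-order-nilpotent} is quoted from Domenico, Monetta and Noce, and the paper instead proves the sharper Theorem~\ref{thm:dom-mos-nose-prod-elt-order-nilpotent-improve}. Your approach is essentially the same machinery used there---factor the nilpotent group into Sylow pieces, apply Corollary~\ref{c:seq-product} to split $\rho$, and use the $p$-group case of Theorem~\ref{thm:upper-bdd-prod-eltorder-G} (which for $|P|=p^a$ reads $\phi(p^a)=|P|(p-1)/p$)---with one extra step: you keep only a single non-cyclic Sylow factor and then pass from $p_{i_0}$ to the smallest prime $q$ via the monotonicity of $x\mapsto (x-1)\ln x/x$, whereas the paper's improvement retains all non-cyclic Sylow primes in the bound and thereby avoids that cross-prime comparison.
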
   
        
        In the following result, we improve the theorem. 
        %It is worth mentioning that in the proof of the following theorem, we do not need Theorem~\ref{t:amiristrong}, as for any nilpotent group $G$ of order $n$, the order sequence of $\Z_n$ dominates the order sequence of $G$. This follows by observing that the order sequence of any finite $p$-group $P$ is dominated by the order sequence of $\Z_{|P|}$ and using Theorem \ref{p:products}.
        
        \begin{theorem}
        	\label{thm:dom-mos-nose-prod-elt-order-nilpotent-improve}
        	Let $G=\Z_m \times G_1$ where $G_1$ is a nilpotent group and moreover $G_1 = P_1 \times P_2 \times \dots \times P_r$ where every $P_i$ is a noncyclic $p$-group (w.r.t the prime $p_i$) and $\text{gcd}(m,|G_1|)=1$. Then, 
        	$$\rho(G) \leq \left( \prod_{i=1}^r p_i^{-\frac{(p_i-1)}{p_i}} \right)^{|G|} \rho(\mathbb{Z}_{|G|}).$$
        	Moreover, equality holds when each $P_i$ is $ \mathbb{Z}_{p_i} \times \Z_{p_i}.$  
        \end{theorem}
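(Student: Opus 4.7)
The plan is to reduce the inequality to iterated applications of Theorem \ref{thm:upper-bdd-prod-eltorder-G}(a) to each non-cyclic Sylow factor $P_i$, combined with the multiplicativity of $\rho$ across coprime direct products furnished by Corollary \ref{c:seq-product}.

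First I would exploit $\gcd(m,|G_1|)=1$ via Corollary \ref{c:seq-product} to write
\[\rho(G) = \rho(\mathbb{Z}_m)^{|G_1|}\rho(G_1)^m,\]
and, applying the same identity to $\mathbb{Z}_{|G|}\cong \mathbb{Z}_m\times\mathbb{Z}_{|G_1|}$,
\[\rho(\mathbb{Z}_{|G|}) = \rho(\mathbb{Z}_m)^{|G_1|}\rho(\mathbb{Z}_{|G_1|})^m.\]
The $\rho(\mathbb{Z}_m)^{|G_1|}$ factors cancel when we form the ratio, leaving $\rho(G)/\rho(\mathbb{Z}_{|G|}) = \bigl(\rho(G_1)/\rho(\mathbb{Z}_{|G_1|})\bigr)^m$. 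Iterating Corollary \ref{c:seq-product} across the pairwise coprime Sylow subgroups of $G_1$, and treating $\mathbb{Z}_{|G_1|}\cong \mathbb{Z}_{|P_1|}\times\cdots\times\mathbb{Z}_{|P_r|}$ in the same way, produces the factorisation
\[\frac{\rho(G_1)}{\rho(\mathbb{Z}_{|G_1|})} = \prod_{i=1}^{r}\left(\frac{\rho(P_i)}{\rho(\mathbb{Z}_{|P_i|})}\right)^{|G_1|/|P_i|}.\]

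Next, since each $P_i$ is a non-cyclic $p_i$-group, Theorem \ref{thm:upper-bdd-prod-eltorder-G}(a) applied to $P_i$ gives $\rho(P_i)/\rho(\mathbb{Z}_{|P_i|})\leq p_i^{-\phi(|P_i|)}$; using $\phi(|P_i|) = |P_i|(p_i-1)/p_i$ (valid because $|P_i|$ is a $p_i$-power), the exponent of $p_i$ in the displayed product becomes $|G_1|(p_i-1)/p_i$. Raising to the $m$-th power and using $|G|=m|G_1|$ turns this into $|G|(p_i-1)/p_i$, which is precisely the exponent called for in the statement.

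For the equality assertion, suppose each $P_i\cong \mathbb{Z}_{p_i}\times\mathbb{Z}_{p_i}$; then the smallest prime divisor of $|P_i|$ is $p_i$, and the equality clause of Theorem \ref{thm:upper-bdd-prod-eltorder-G} forces each factor inequality above to be an equality. Since the remaining steps are chains of identities, equality propagates to the final bound. There is no substantive obstacle: the argument is essentially a bookkeeping exercise in exponents once the coprime multiplicativity of $\rho$ is in hand, with the only mild care needed being to verify that the exponent of $p_i$ assembles correctly after the two successive direct-product reductions.
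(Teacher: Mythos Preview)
Your proof is correct and uses essentially the same ingredients as the paper: the coprime multiplicativity of $\rho$ from Corollary~\ref{c:seq-product} together with the bound $\rho(P_i)\le p_i^{-\phi(|P_i|)}\rho(\Z_{|P_i|})$ for each non-cyclic Sylow factor. The only cosmetic difference is that the paper presents the decomposition as an induction on $r$ (peeling off one Sylow factor at a time and re-deriving the $P_i$-bound directly from the order-sequence comparison rather than citing Theorem~\ref{thm:upper-bdd-prod-eltorder-G}), whereas you unwind the induction and write the full product factorisation $\rho(G_1)=\prod_i\rho(P_i)^{|G_1|/|P_i|}$ at once; the content is the same.
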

        
        \begin{proof}
        	We prove this by induction on $r$. When $r=1$, we have $G= \Z_m \times P_1$.  By Corollary \ref{c:seq-product}, we clearly have 
        	$$\rho(G)=\rho(\Z_m \times P_1)= \rho(\Z_m)^{|P_1|} \rho(P_1)^{m}. $$
        	As in Theorem \ref{thm:upper-bdd-prod-eltorder-G}, we compare the order sequences of $P_1$ and $\Z_{|P_1|} $. We know that the order sequence of $P_1$ is dominated by the order sequence of  $\Z_{|P_1|} $ and
        	moreover, $\Z_{|P_1|}$ has $\frac{|P_1|(p_1-1)}{p_1}$ elements of order $|P_1|$ and the last $\frac{|P_1|(p_1-1)}{p_1}$ terms of $\os(P_1)$ are clearly $\leq \frac{|P_1|}{p_1}$. Therefore,
        	we clearly have 
        	
        	\begin{eqnarray*} \rho(\Z_m)^{|P_1|} \rho(P_1)^{m} & \leq & \rho(\Z_m)^{|P_1|}  \left( p_1^{-\frac{|P_1|(p_1-1)}{p_1}} \rho(\Z_{|P_1|}) \right)^{m} \\
        		& 	= &  \left( p_1^{-\frac{(p_1-1)}{p_1}} \right)^{m|P_1|} \rho(\Z_m)^{|P_1|} \rho(\Z_{|P_1|}) ^{m} \\ & = & \left( p_1^{-\frac{(p_1-1)}{p_1}} \right)^{m|P_1|} \rho(\Z_{m|P_1|}) .
        	\end{eqnarray*} 
        	Thus, when $r=1$, the statement holds. We assume the statement for $r=\ell-1$ and prove for $r=\ell$.
        	Let $G=\Z_m \times P_1 \times \dots \times P_{\ell-1} \times P_{\ell}.$ Moreover, let $H= \Z_m \times P_1 \times \dots \times P_{\ell-1} $.  By Corollary \ref{c:seq-product}, we clearly have 
        	
        	$$\rho(G)=\rho(H \times P_{\ell})= \rho(H)^{|P_{\ell}|} \rho(P_{\ell})^{|H|}. $$
        	By induction and using the fact that $P_{\ell}$ is non-cyclic nilpotent, we have 
        	
        	\begin{align*} \rho(H)^{|P_{\ell}|} \rho(P_{\ell})^{|H|} &  \leq  \left ( \left( \prod_{i=1}^{\ell-1} p_i^{-\frac{(p_i-1)}{p_i}} \right)^{|H|} \rho(\mathbb{Z}_{|H|}) \right) ^{|P_{\ell}|}  \left( p_{\ell}^{-\frac{|P_{\ell}|(p_{\ell}-1)}{p_{\ell}}} \rho(\Z_{|P_{\ell}|}) \right)^{|H|} \\
        		&	=  \left( \prod_{i=1}^{\ell} p_i^{-\frac{(p_i-1)}{p_i}} \right)^{|G|} \rho(\mathbb{Z}_{|H|})^{|P_{\ell}|} \rho(\mathbb{Z}_{|P_{\ell}|})^{|H|} \\
        		& =  \left( \prod_{i=1}^ {\ell} p_i^{-\frac{(p_i-1)}{p_i}} \right)^{|G|}   \rho(\mathbb{Z}_{|G|})
        	\end{align*} 
        	This completes the proof. 	It is easy to see that when each $P_i$ is $ \mathbb{Z}_{p_i} \times \Z_{p_i}$, the equality holds. 
        \end{proof}

\section{More general groups} 	
\label{sec:moregeneral}

Figure~\ref{f:g60} shows the the Hasse dagram for the partially ordered set of
the $13$ isomorphism types of groups of order~$60$, ordered by domination of
the order sequences. The numbers are those in the \textsf{GAP}
\texttt{SmallGroups} library.

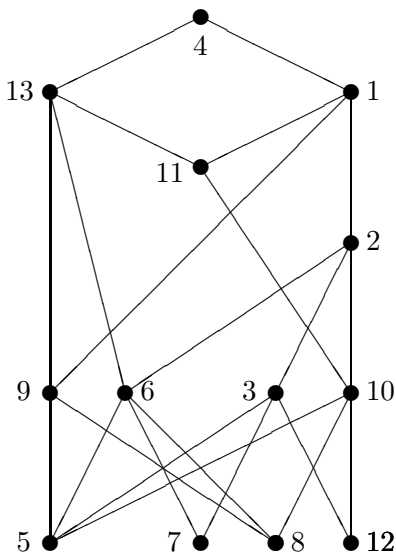
\begin{figure}[htbp]
\begin{center}
\setlength{\unitlength}{1mm}
\begin{picture}(40,70)
\put(20,70){\circle*{2}}\put(19,65){$4$}
\multiput(0,60)(40,0){2}{\circle*{2}}\put(-6,59){$13$}\put(42,59){$1$}
\put(20,50){\circle*{2}}\put(14,48){$11$}
\put(40,40){\circle*{2}}\put(42,39){$2$}
\multiput(0,20)(10,0){2}{\circle*{2}}\put(-4.5,19){$9$}\put(12,19){$6$}
\multiput(30,20)(10,0){2}{\circle*{2}}\put(25.5,19){$3$}\put(42,19){$10$}
\multiput(0,0)(20,0){3}{\circle*{2}}\put(-4.5,-1){$5$}
\put(15.5,-1){$7$}\put(42,-1){$12$}
\put(30,0){\circle*{2}}\put(32,-1){$8$}
\multiput(20,70)(20,-10){2}{\line(-2,-1){20}}
\multiput(20,70)(-20,-10){2}{\line(2,-1){20}}
\multiput(0,0)(40,0){2}{\line(0,1){60}}\put(42,-1){$12$}
\put(30,0){\circle*{2}}
\put(42,-1){$12$}
\put(0,60){\line(1,-4){10}}
\put(40,60){\line(-1,-1){40}}
\put(20,50){\line(2,-3){20}}
\put(40,40){\line(-3,-2){30}}
\put(40,40){\line(-1,-2){10}}
\put(0,20){\line(3,-2){30}}
\put(10,20){\line(-1,-2){10}}
\put(10,20){\line(1,-2){10}}
\put(10,20){\line(1,-1){20}}
\put(30,20){\line(-3,-2){30}}
\put(30,20){\line(-1,-2){10}}
\put(30,20){\line(1,-2){10}}
\put(40,20){\line(-2,-1){40}}
\put(40,20){\line(-1,-2){10}}
\end{picture}
\end{center}
\caption{\label{f:g60}Groups of order~$60$}
\end{figure}

Note that \texttt{SmallGroup(60,4)} is the cyclic group $\mathbb{Z}_{60}$,
\texttt{SmallGroup(60,13)} is $\mathbb{Z}_2\times\mathbb{Z}_{30}$ (the other
nilpotent group of order~$60$). Theorem~\ref{thm:non-nil-min-os-existence}
allows construction of three different non-nilpotent groups whose order
sequence is dominated by those of the two nilpotent groups; these are
\texttt{SmallGroup(60,$i$)} for $i=9,10,11$. Also, \texttt{SmallGroup(60,5)}
is the alternating group $A_5$, the unique non-solvable group of order~$60$;
it is minimal in the domination order but not unique with this property.

This example shows that the domination order will be difficult
to understand in general. Notice how abelian groups form only a very small
part of the poset, which as noted is a kind of non-abelian generalization
of the partition lattice.

\section{Connection with graphs}
\label{sec:graphs}

We present here some links between our problem and certain graphs associated
with finite groups.

The \emph{power graph} of a finite group $G$ has vertex set $G$, with an edge
$\{g,h\}$ if and only if one of $g$ and $h$ is a power of the other, that is,
either $h=g^m$ or $g=h^m$ for some integer $m$. For a recent survey of 
properties of this graph, see~\cite{kscc}.

\begin{theorem}
Suppose that $G$ and $H$ are finite groups with isomorphic power graphs. Then
$\os(G)=\os(H)$.
\end{theorem}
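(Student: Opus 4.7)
The strategy is to show that the order $o(g)$ of every vertex is encoded in the graph $\mathcal{P}(G)$ alone as a graph-theoretic invariant (either per-vertex or at least as a multiset). Any isomorphism $\Phi\colon\mathcal{P}(G)\to\mathcal{P}(H)$ must then preserve this information, giving $\os(G)=\os(H)$.

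The key structural fact I would establish first is that the maximal cliques of $\mathcal{P}(G)$ are exactly the maximal cyclic subgroups of $G$. One direction is immediate: any cyclic subgroup is a clique. For the converse, on any clique $K$ the relation ``$x\in\langle y\rangle$'' is reflexive and transitive (if $\langle x\rangle\subseteq\langle y\rangle\subseteq\langle z\rangle$ then $x\in\langle z\rangle$), and total by the clique property; a finite total preorder has a maximum element $z$, so $K\subseteq\langle z\rangle$. Consequently each maximal clique coincides with a maximal cyclic subgroup, and its cardinality is the order of that subgroup.

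With this in hand I would recover $o(g)$ via the closed twin relation $g\equiv h\iff N[g]=N[h]$. Generators of a common cyclic subgroup are always twins (since $\langle g\rangle=\langle h\rangle$ forces $N[g]=N[h]$ via the symmetric role of $g,h$ in every containment relation), so each twin class is a union of sets of generators of cyclic subgroups. In the generic situation the twin class of $g$ consists of exactly the $\phi(o(g))$ generators of $\langle g\rangle$; combining the class sizes with the partial order on twin classes --- inherited from the inclusion lattice of cyclic subgroups and computable from the graph via adjacency of representatives --- then recovers $o(g)$.

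The main obstacle will be that twin classes can merge generators of distinct cyclic subgroups, most famously in cyclic $p$-groups (whose power graphs are complete $K_{p^k}$) and in containing groups like $Q_8$, where $1$ and $-1$ are twins while generating different cyclic subgroups. My plan for handling these degenerate classes is to detect them combinatorially: they correspond to a specific pattern of mutually twin vertices sitting inside intersections of maximal cliques of prime-power size, and the multiset of orders within such a class is then forced by matching with the unique possible cyclic-group order sequence $\phi(1),\phi(p),\dots,\phi(p^k)$ that fits the clique-and-twin data. Once these blocks are resolved, orders propagate through the rest of the graph by the reasoning above, showing that $\os(G)$ is a graph invariant and finishing the proof.
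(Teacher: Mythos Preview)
Your opening structural claim is false: cyclic subgroups are \emph{not} in general cliques in the power graph. In $\Z_6$ the element of order~$2$ and an element of order~$3$ are not adjacent, since neither lies in the cyclic subgroup generated by the other; so the cyclic group $\Z_6$ fails to be a clique in its own power graph. (You may be thinking of the \emph{enhanced} power graph, in which $g\sim h$ iff $\langle g,h\rangle$ is cyclic; there the identification does hold.) What survives from your argument is only the containment you prove in the other direction: every clique lies inside some cyclic subgroup. Consequently maximal cliques do not coincide with maximal cyclic subgroups, their cardinalities do not record the orders of those subgroups, and the later steps of your plan that read off orders from ``maximal cliques of prime-power size'' have no foundation.

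The paper does not attempt a direct reconstruction at all. It simply quotes the result as Corollary~3 of~\cite{power2}, whose main theorem asserts that the undirected power graph of a finite group determines the \emph{directed} power graph up to isomorphism; in the directed graph $o(g)$ is one more than the out-degree of $g$, so the order sequence is immediate. The proof in~\cite{power2} does proceed via closed-neighbourhood twin classes, so your instinct there is sound, but the analysis of the exceptional twin classes (for instance, the identity merged with all non-identity elements when $G$ is cyclic of prime-power order) is delicate and does not reduce to the clique picture you sketch. A self-contained argument along your lines would have to replace the faulty clique identification with the correct closed-neighbourhood analysis carried out there.
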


This is \cite[Corollary 3]{power2}. It can be seen as follows. The
\emph{directed power graph} of $G$ is the directed graph which has an arc from
$g$ to $h$ if and only if $h$ is a power of $G$. It is clear that the order of
an element $g$ is the out-degree of $g$ in this graph plus one (one for the
element itself). The main theorem of \cite{power2} asserts that the power graph
determines the directed power graph up to isomorphism.

As a consequence, \cite[Theorem 1]{power}
(stating that finite abelian groups are determined up to isomorphism by their
power graphs) is extended by Theorem~\ref{t:abelian} of this paper.

Pairs of groups with the same order sequence may or may not have isomorphic
power graphs. Examples of order~$16$ exhibit both behaviours: the $14$ groups
give rise to $12$ different power graphs and $9$ order sequences.
See~\cite{ms} for some results on groups with the same power graph.

\medskip

The \emph{Gruenberg--Kegel graph} of a finite group has vertices the prime
divisors of $G$, wth an edge $\{p,q\}$ if and only if $G$ contains an element
of order $pq$. This small graph contains a surprising amount of information
about the group $G$. Sometimes it is considered as a labelled graph, with
each vertex labelled by the corresponding prime. See~\cite{cm} for a recent
survey, and \cite{higman,Brandl-shi} for interesting earlier results.

\begin{proposition}
Let $G$ and $H$ be finite groups with $\os(G)=\os(H)$. Then the labelled
Gruenberg--Kegel graphs of $G$ and $H$ are equal.
\end{proposition}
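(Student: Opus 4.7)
The plan is essentially to observe that the Gruenberg--Kegel graph depends only on the \emph{spectrum} of the group (the set of element orders), and the spectrum is visibly determined by the order sequence.

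First I would note that since $\os(G)$ and $\os(H)$ have the same length, $|G|=|H|$, so the set of prime divisors of the two group orders coincides; this gives equality of the labelled vertex sets of the two Gruenberg--Kegel graphs. (Equivalently, by Cauchy's theorem, $p$ divides $|G|$ iff $p$ appears as an entry of $\os(G)$, so the set of prime divisors is read directly off the order sequence.)

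Next I would observe that the spectrum $\pi_e(G):=\{o(g):g\in G\}$ is exactly the set of distinct values appearing in $\os(G)$. Hence $\os(G)=\os(H)$ forces $\pi_e(G)=\pi_e(H)$.

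Finally, for primes $p,q$ dividing $|G|=|H|$, the pair $\{p,q\}$ is an edge of the Gruenberg--Kegel graph of $G$ by definition iff $pq\in\pi_e(G)$, and likewise for $H$; the equality of spectra established in the previous step finishes the proof. There is no substantive obstacle here: the whole content is that the order sequence trivially determines both the prime divisors of $|G|$ and the set $\{m:G\text{ has an element of order }m\}$, and the Gruenberg--Kegel graph is built from exactly this data.
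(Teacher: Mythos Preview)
Your proposal is correct and is exactly the argument the paper has in mind: the paper's entire proof reads ``This result is obvious from the definitions,'' and you have simply unpacked that observation---the order sequence determines both the prime divisors of $|G|$ (the vertex set) and the spectrum $\pi_e(G)$ (hence the edge set).
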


This result is obvious from the definitions.

\section{Open questions}
\label{sec:more-questions}

\begin{question}
Investigate the relation of strong domination on groups of given order $n$.
\end{question} 

We note that domination and strong domination coincide for the case when $n$
is a prime power, since in that case the relations of order and divisibility
coincide on the divisors of $n$.

\begin{question}
Is the condition that $G$ is abelian necessary in Theorem~\ref{t:extension}?
\end{question}

We have already seen that if a group $G$ has the same order sequence as a cyclic group, then $G$ must be cyclic; whereas a non-abelian group can have the same order sequence as an abelian group. 
Moreover, nilpotency is characterised by the order sequence, and if there is a non-nilpotent group of order $n$ then a nilpotent group cannot be minimal.

What happens for solvable groups? 

\begin{question}
Is it true that a group having the same order sequence as a solvable group
is solvable?
\end{question}

We could ask the same question with ``supersolvable'' in place of ``solvable''.

\begin{question}
If there is a non-solvable group of order $n$, is it true that at least one group of order $n$ with minimal order sequence is non-solvable?
\end{question}

Note that there is a non-solvable group of order $n$ if and only if $n$ is a multiple of the order of a minimal (non-abelian) simple group; these groups were determined by Thompson~\cite{ngroups}. The case $n=60$ shown in Figure~\ref{f:g60} shows that, unlike for nilpotency, there will not be a non-solvable group which is dominated by every solvable group.

\begin{question}
Let $G$ and $H$ be two non-isomorphic (non-abelian) simple groups of the same order. Is it true that either $\os(G)$ dominates $\os(H)$ or \emph{vice versa}?
\end{question}

From the Classification of Finite Simple Groups, it is known that the only pairs of simple groups of the same order are
\begin{itemize}
\item $A_8$ and $\mathrm{PSL}(3,4)$; and
\item $\mathrm{PSp}(2n,q)$ and $\mathrm{P}\Omega(2n+1,q)$, where $q$ is an odd
prime power and $n\ge3$.
\end{itemize}
For the first pair, the $\mathbb{ATLAS}$ of Finite Groups~\cite{atlas} shows
that $\os(A_8)$ dominates $\os(\mathrm{PSL}(3,4))$. The answer is not known
in the other cases.

\begin{question}
Given a sequence of $n$ natural numbers, is it the order sequence of a group? If it is, then how many groups, and can we construct them? (For computational complexity reasons, it is better to take the input to be the collected order sequence, the set of pairs $(m,s(m))$ where $m$ is the order of an element and $s(m)$ the number of elements of this order, since this only requires a polylogarithmic amount of data.
\end{question} 

\begin{question}
Investigate further the poset of order sequences of finite groups.
\end{question}

\begin{question}
Given a finite poset $P$, find the minimum value of $n$ such that $P$ is
embeddable in the order sequence poset of groups of order~$n$.

We can ask the same question restricting to abelian groups.
\end{question}

For example, the $2$-element antichain is represented by two groups of
order $12$, or two abelian groups of order $36$; these are the smallest
possible orders.

\subsection*{Acknowledgements}
The second author would like to thank Prof.~Angsuman Das, Archita Mondal, Prof.~Sivaramakrishnan Sivasubramanian, and Prof.~Manoj Kumar Yadav
for helpful discussions. 
The second author acknowledges a SERB-National Post Doctoral Fellowship (File PDF/2021/001899) during
the preparation of this work and profusely thanks Science and Engineering Research Board, Government of India for this
funding. The second author also acknowledges excellent working conditions in the Department of Mathematics, Indian Institute
of Science.

\end{document}